\newtheorem{proposition}{Proposition}
\newtheorem{theorem}{Theorem}
\newtheorem{definition}{Definition}
\newtheorem{corollary}{Corollary}
\newtheorem{lemma}{Lemma}
\newtheorem{assumption}{Standing Assumption}
\newtheorem{example}{Example}
\newtheorem{desiderata}{Desiderata}
\newtheorem{kernel}{Kernel}
\crefname{assumption}{Assumption}{Assumptions}
\crefname{figure}{Figure}{Figures}
\crefname{lemma}{Lemma}{Lemmas}
\DeclareMathOperator*{\arginf}{arg\,inf}
\begin{document}

\title{A Riemann--Stein Kernel Method}

\author{Alessandro Barp$^{1,2,*}$, Chris. J. Oates$^{3,2,*}$, Emilio Porcu$^{4,5}$, Mark Girolami$^{1,2}$ \vspace{5pt} \\
\small $^1$University of Cambridge, UK \\
\small $^2$Alan Turing Institute, London, UK \\ 
\small $^3$Newcastle University, UK \\ 
\small $^4$Khalifa University, Abu Dhabi \\
\small $^5$Trinity College Dublin, IE \\ 
\small $^*$To whom correspondence should be addressed.}

\maketitle

\begin{abstract}
This paper proposes and studies a numerical method for approximation of posterior expectations based on interpolation with a Stein reproducing kernel.
Finite-sample-size bounds on the approximation error are established for posterior distributions supported on a compact Riemannian manifold, and we relate these to a \emph{kernel Stein discrepancy} (KSD).
Moreover, we prove in our setting that the KSD is equivalent to \emph{Sobolev discrepancy} and, in doing so, we completely characterise the convergence-determining properties of KSD.
Our contribution is rooted in a novel combination of Stein's method, the theory of reproducing kernels, and existence and regularity results for partial differential equations on a Riemannian manifold.

\end{abstract}

\begin{bibunit}[plainnat]

\section{Introduction} \label{sec: intro}

The focus of this paper is the numerical approximation of an intergal
\begin{eqnarray}
\int_M f \; \mathrm{d}P \label{eq: integral}
\end{eqnarray}
where $P$ is a probability measure on a compact,  connected, complete Riemannian manifold $M$ and $f \in C(M,\mathbb{R})$.
The notation $C(A,B)$ indicates the set of continuous functions from a topological space $A$ to a topological space $B$.
The distribution $P$ is assumed to admit a density $p$ with respect to the natural volume measure on $M$,  specified only up to an unknown normalisation constant.
It is assumed that direct computation of the normalisation constant is difficult and therefore precluded.
This final point demands special consideration and prevents standard numerical integration method from being used.
This situation is of course regularly encountered in Bayesian statistics, where $P$ is a posterior distribution whose density $p$ is specified in un-normalised form as the product of a prior and a likelihood.
In general the direct computation of the normalisation constant is difficult in the Bayesian context \citep{Gelman1998}.

Several approaches to approximation of \eqref{eq: integral} have been developed.
These range from heuristic approaches, such as variational inference \citep{Kingma2014} and the Laplace approximation \citep{Rue2009}, through to asymptotically exact approaches such as Markov chain Monte Carlo \citep[MCMC;][]{Gilks1995}.
Among asymptocially exact approaches, MCMC is most widely-used and its convergence theory is well-developed \citep{Meyn2012}.
However, the absolute error of the ergodic average is gated at $O_p(n^{-1/2})$, where $n$ is the number of evaluations of the integrand.
This rate is sub-optimal for an $s$-times weakly differentiable integrand when $s > \frac{d}{2}$ and $d$ is the dimension of the manifold; a consequence of the fact that the ergodic average does not exploit smoothness properties of the integrand \citep{Traub2003}.
In recent years, several alternatives to MCMC have been developed to address this convergence bottleneck, focussing on smooth integrals of low effective dimension for which sub-optimality of MCMC is most pronounced.
These include transport maps \citep{Marzouk2016}, Riemann sums \citep{Philippe2001}, quasi Monte Carlo ratio estimators \citep{Schwab2012,Dick2016,Dick2019}, minimum energy designs \citep{Joseph2017}, support points \citep{Mak2016} and estimators based on Stein's method \citep{Liu2016,Oates2017,Liu2017b,Oates2018,Chen2018,Chen2019,Hodgkinson2020,Riabiz2020,Teymur2020,Fisher2020}.
The computational cost of some of these methods is higher than $O(n)$ and, for the method that we study in this work, the cost is $O(n^3)$.
Thus any accelerated convergence being offered must be weighed against this increased computational overhead.

\subsection{Context}

The purpose of this paper is to study the numerical approximation of posterior expectations based on interpolation with a Stein reproducing kernel, an approach first proposed in the Euclidean context in \cite{Oates2017}.
To this end, we recall how Stein's method \citep{Stein1972} can be used for the numerical approximation of \eqref{eq: integral}.
Let $\mathcal{P}(M)$ denote the space of Borel distributions on $M$.

\begin{definition}
Fix $P \in \mathcal{P}(M)$.
Consider a set $\mathcal{H} \subset C(M,\mathbb{R})$ and an operator $\tau : \mathcal{H} \rightarrow C(M,\mathbb{R})$ with the property that, for all $Q \in \mathcal{P}(M)$,
\begin{eqnarray*}
P \; = \; Q & \Leftrightarrow & \int_M \; \tau h \; \mathrm{d}Q \; = \; 0  \qquad \forall h \in \mathcal{H} . \label{def: Stein class}
\end{eqnarray*}
Then $(\mathcal{H} , \tau)$ is said to be a \emph{Stein characterisation} of $P$.
In this case the set $\mathcal{H}$ is called a \emph{Stein class} and the operator $\tau$ is called a \emph{Stein operator}.
If only the $\Rightarrow$ implication holds, so that $\mathcal{H}$ need not be rich enough to distinguish elements in $\mathcal{P}(M)$, then we call $(\mathcal{H},\tau)$ a \emph{Stein pair} for $P$. 
\end{definition}
\noindent The definition of a Stein characterisation is classical, but in this paper only the (novel) definition of a Stein pair will be used.
The reader is referred to \cite{Ley2017} for further background on Stein's method.
For the moment, let us suppose that a Stein pair $(\mathcal{H},\tau)$ for $P$ can be found.
Then \eqref{eq: integral} can be approximated in direct a manner, that will now be explained:
First, select a set of distinct locations $X = \{\bm{x}_i\}_{i=1}^n \subset M$ at which the integrand is to be evaluated.
Then construct an estimator of the form
\begin{eqnarray}
P_X(f) & := & \arginf_{\xi \in \mathbb{R}} \; \inf_{h \in \mathcal{H}} \; \sum_{i=1}^n \Big( \xi + \tau h(\bm{x}_i) - f(\bm{x}_i) \Big)^2 + R_1(\xi) + R_2(h)  \label{eq: Stein method}
\end{eqnarray}
where $R_1$ and $R_2$ are regularisation terms to be specified, whose purpose is to ensure that a (unique) minimum will exist.
The form of \eqref{eq: Stein method} can be motivated as constructing an approximation $f_X$ to the integrand $f$, based on evaluation of $f$ at the locations in $X$, in the class of functions of the form $f_X :=\xi + \tau h$, where $\xi \in \mathbb{R}$ and $h \in \mathcal{H}$.
The definition of a Stein pair ensures that $\int_M f_X \mathrm{d}P = \xi$, so \eqref{eq: Stein method} can be interpreted as the integral of an approximation $f_X$ to the integrand.
Numerical approximations of the form \eqref{eq: Stein method} are increasingly being adopted in applied and methodological work \citep{liu2017action,south2017efficient,brosse2018diffusion,roussel2019perturbative,lam2019stability,mijatovic2019asymptotic}.
The properties of \eqref{eq: Stein method} depend on the class $\mathcal{H}$, the operator $\tau$, the set of points $X$ where $f$ is evaluated, and the regularisation terms $R_1,R_2$.
Thus the above formulation is quite general and some specific choices are discussed next.

\paragraph*{Previous Work for $M = \mathbb{R}^d$:}

All previous work on \eqref{eq: Stein method} has focussed on the Euclidean context with $M = \mathbb{R}^d$.
In particular, \cite{Assaraf1999,Mira2013} considered the case where $\mathcal{H} \subset C(\mathbb{R}^d, \mathbb{R})$ is a space of low-degree polynomials under no regularisation, i.e. $R_1, R_2 \equiv 0$.
This was combined with the operator $\tau : \mathcal{H} \rightarrow C(\mathbb{R}^d,\mathbb{R})$, $\tau h = \nabla \cdot (p \nabla h) / p$, a second-order differential operator that can be evaluated without access to $p$'s normalisation constant.
This led to an over-constrained least-squares problem and \eqref{eq: Stein method} can be seen as a classical control variate method.
Higher-order polynomials were more considered in \cite{South2018} in conjunction with an appropriate regularisation to ensure the least-squares problem remains well-posed.

The innovation in \cite{Oates2017} was to consider instead an infinite-dimensional normed space for $\mathcal{H}$.
The operator in \cite{Oates2017} was $\tau : \mathcal{H} \rightarrow C(\mathbb{R}^d,\mathbb{R})$, where $\mathcal{H} \subset C(\mathbb{R}^d,\mathbb{R}^d)$ was a Cartesian product of reproducing kernel Hilbert spaces and $\tau h = \nabla \cdot (p h) /p$ was a first-order differential operator that can again be evaluated without $p$'s normalisation constant.
To complete the specification of the method, the following natural regularisation terms were proposed:
\begin{align} 
\begin{aligned}
R_1(\xi) & \; = \; \sigma^{-2} \xi^2 \\
R_2(h) & \; = \; \inf\{ \|h'\|_{\mathcal{H}}^2 : h' \in \mathcal{H}, \; \tau (h' - h) = 0 \} \label{eq: Stein kernel method}
\end{aligned}
\end{align}
where $\sigma > 0$ was a parameter to be specified and $\|\cdot\|_{\mathcal{H}}$ denotes the norm associated to $\mathcal{H}$.
That the term $R_2(h)$ should depend on $h$ through $\tau h$ is natural, since $h$ enters into the approximation $f_X$ only through $\tau h$.
The solution to \eqref{eq: Stein method} can be computed in closed-form when reproducing kernels are employed and, since in this case $f_X$ interpolates the data $X$, we call $f_X$ a \emph{Stein kernel interpolant}.
In recent work \cite{South2020} proposed a semi-parametric approach that combined elements from \cite{Oates2017} and \cite{Assaraf1999,Mira2013}.

In parallel work in the Euclidean context, \cite{Liu2017b} considered adding additional regularisation to \eqref{eq: Stein method}, whilst alternatives to the squared error objective have also been considered, including an empirical variance estimator \citep{Belomestny2017} and an estimator of asymptotic variance \citep{belomestny2020variance} for use in the MCMC context.
The latter were shown, empirically, to improve estimator performance but at the cost of no longer having a closed-form expression for the estimator.
In a different direction, \cite{Zhu2018} proposed to take $\mathcal{H}$ to be a finite-dimensional parametric neural network and empirically explored its potential, while \cite{Si2020} provided a theoretical analysis of stochastic gradient descent in that context.

\paragraph*{Related Work for General $M$:}

Beyond the Euclidean case, in parallel to the present research, a Stein pair for distributions supported on a Riemannian manifold was provided in \cite{Liu2017,xu2020stein} and this was extended to a Stein characterisation in \cite{Le2020}.
The focus of \cite{Liu2017} was the design of a gradient flow on a manifold, the focus of \cite{xu2020stein} was on goodness-of-fit testing and the focus of \cite{Le2020} was weak convergence control.
Our analysis starts from the same Stein pair, but diverges thereafter, with our focus being on the numerical approximation of \eqref{eq: integral}.

\subsection{Our Contributions}

Our contributions are as follows:
\begin{itemize}
\item {\it Generalisation to a Riemannian Manifold:}
Integrals on manifolds arise in many important applications of Bayesian statistics, most notably directional statistics \citep{Mardia2000} and modelling of functional data on the sphere $\mathbb{S}^2$ \citep{Porcu2016}.
In this context, MCMC methods have been developed to sample from distributions defined on a manifold \citep[e.g.][]{Diaconis2013,barp2019hamiltonian,Byrne2013,Lan2014,Holbrook2016,arnaudon2019irreversible}.
In this paper we complement existing work by generalising the Stein kernel interpolant of \cite{Oates2017} to integrals defined on a Riemannian manifold.

\item {\it New Insight into Discrepancy:} The {\it kernel Stein discrepancy} \citep[KSD;][]{Chwialkowski2016,Liu2016b,Gorham2017} is an upper bound on the worst-case error of the Stein kernel interpolant.
For our setting, we prove that this discrepancy is equivalent to a classical Sobolev discrepancy on the manifold.
An explicit statement is presented in \Cref{thm: equivalent kernels}.
Thus we completely characterise the convergence-determining properties of KSD in this context.
This is our main result and our proof combines Stein's method, the theory of reproducing kernels and existence and regularity results for partial differential equations on a Riemannian manifold.

\item {\it Finite-Sample-Size Error Bound:} 
Bounds on the worst-case approximation error are established as consequences of the Sobolev connection in \Cref{thm: equivalent kernels}.
The optimal convergence rate is established, under appropriate regularity assumptions on the distribution $P$ and the point set $X$.
An explicit statement of the rate is presented in \Cref{thm: main result}.
\end{itemize}
The paper now proceeds to \Cref{sec: background}, where we provide a brief mathematical background.
In \Cref{subsec: statement} we state our main results, with discussion provided in \Cref{sec: discussion}.

\section{Background} \label{sec: background}

The purpose of this section is to introduce the mathematical tools that are needed for our development.
The experienced reader may prefer to continue directly to \Cref{subsec: statement} and return later to \Cref{sec: background} if required.
In \Cref{subsec: RM} we recall the definition of a Riemannian manifold, in \Cref{subsec: GMT} we review geometric measure theory, in \Cref{subsec: CRM} we describe calculus on a Riemannian manifold, in \Cref{subsec: RKHS} we define reproducing kernel Hilbert spaces and in \Cref{subsubsec: sobolev norm} we construct a Sobolev norm on a Riemannian manifold.

\subsection{Riemannian Manifolds} \label{subsec: RM}

Let $C^l(A,B)$ denote the set of all $l$-times continuously differentiable functions from $A$ to $B$, where $A$ and $B$ posess suitable structure for the notion of a continuous derivative to be well-defined.

A $d$-dimensional \emph{manifold} $M$, $d \in \mathbb{N}$, is a Hausdorff topological space for which every point $\bm{x} \in M$ has an open neighbourhood $U_{\bm{x}}$ homeomorphic to an open subset of $\mathbb R^d$ or an open subset of the closed $d$-dimensional upper half-space if $M$ has a boundary $\partial M$ \citep[see p25 of][]{Lee2013}. 
If $\phi:U \rightarrow \mathbb R^d$ is a homemorphism (onto its image) with $\bm{x} \in U \subset M$, we say $(U,\phi)$ is a \emph{coordinate patch} around $\bm{x}$. 
This defines \textit{coordinate functions} $q_j :=\pi_j \circ \phi$  over $U$, where $\pi_j:\mathbb R^d \rightarrow \mathbb R$ are the canonical projections and $\circ$ denotes composition of functions. 
A $C^\infty$ \emph{atlas} is a collection of coordinate patches $(U_i, \phi_i)$ that cover $M$ such that the \textit{transition functions} $\phi_j \circ \phi_i^{-1}$ are $C^\infty( \mathbb R^d, \mathbb{R}^d)$ whenever they are defined. 
A manifold is \emph{smooth} if it posesses a $C^\infty$ atlas.
The \emph{tangent space} $T_{\bm{x}}M$ at $\bm{x}$ is the vector space of linear functionals over $C^{\infty}(M)$ satisfying Leibniz rule. 
If $q_j$ are coordinates on a patch $(U, \phi)$ containing $\bm x$, the \textit{coordinate vectors} $\partial_{q_j} |_{\bm x}:f \mapsto   \partial (f \circ \phi_j^{-1}) / \partial q_j |_{\phi( \bm x ) }$ define a basis of  $T_{\bm{x}}M$.
We say $(M,g)$ is a \emph{Riemannian} manifold if $M$ has a \textit{metric tensor} $g$, i.e., a smooth map $\bm{x} \mapsto g_{\bm{x}}$ such that $g_{\bm{x}}$ is an inner product on $T_{\bm{x}}M$. 
It will be convenient to represent the metric tensor $g_{\bm{x}}$ as a matrix $\mathrm{G}(\bm{x})$ with coordinates  $\mathrm{G}_{ij}(\bm{x}):= g_{\bm{x}}\big( \partial_{q_i} |_{\bm x},\partial_{q_j} |_{\bm x} \big)  $.

The non-Euclidean manifolds of interest in most applications are closed; for example the group of rotations, Grassmannian manifolds, and Stiefel manifolds such as hyperspheres, though we will also allow manifolds with boundary for completeness.

\begin{assumption}
$M$ is a smooth, compact, complete, and connected manifold, that is either closed or is a manifold with boundary.
\end{assumption}

In the case of a manifold with boundary, the outward-pointing \textit{unit normal} $\bm{n}$ to the boundary $\partial M$ of the manifold can be defined via the fact that, since $(\partial M,\left. g \right|_{\partial M})$ is a Riemannian submanifold of $(M,g)$, then for each $\bm{x} \in \partial M$, the metric $g_{\bm{x}}$ of $M$ splits the tangent space $T_{\bm{x}}M$ into $T_{\bm{x}} \partial M$ and its orthogonal complement $N_{\bm{x}}$; i.e. $T_{\bm{x}} M = T_{\bm{x}} \partial M \oplus N_{\bm{x}}$. 
Here $N_{\bm{x}}$ is the one-dimensional space spanned by the normal vector to $\partial M$.
See e.g. \cite{Bachman2006}.

The \emph{geodesic distance} $d_M(\bm{x},\bm{y})$ on a complete Riemannian manifold is defined as the infimum of the length $ \int_0^1  \sqrt{g(\dot \gamma(t),\dot \gamma(t))}\mathrm{d}t$ over all continuously differentiable curves $\gamma:[0,1] \to M$ with $\gamma(0) = \bm{x}$ and $\gamma(1) = \bm{y}$, where $\dot \gamma$ is the tangent vector to $\gamma$.

\begin{example} \label{example: S2}
The sphere $\mathbb{S}^2$ is a 2-dimensional Riemannian manifold.
The coordinate patch $\phi$ with $\phi^{-1}(\bm{q})=(\cos q_1 \sin q_2 , \sin q_1 \sin q_2 , \cos q_2)$, with local coordinates $q_1 \in (0,2\pi)$, $q_2 \in (0,\pi)$, covers almost all $\bm{x} \in \mathbb{S}^2$ (it does not cover the half great circle that passes through both poles and the point $(1,0,0)$). 
The tangent space $T_{\phi^{-1}(q_1,q_2)}M$ is spanned by $\partial_{q_1}  = (-\sin q_1 \sin q_2, \cos q_1 \sin q_2, 0)$ and $\partial_{q_2} = (\cos q_1 \cos q_2, \sin q_1 \cos q_2, -\sin q_2)$. 
Let $s = s_1 \partial_{q_1} + s_2 \partial_{q_2} \in T_{\bm{x}} M$ be associated with the coefficient vector $\bm{s}^\top = [s_1 , s_2]$ and similarly for $\bm{t}^\top = [t_1, t_2]$.
Taking the Euclidean inner product of these vectors shows that $g_{\bm{x}}( s , t) := \langle \bm{s} ,\bm{t} \rangle_{\mathrm{G}} =\bm{s}^\top \mathrm{G}\bm{t}$ where $\mathrm{G}_{1,1}= \sin^2 q_2$, $\mathrm{G}_{2,2} = 1$, $\mathrm{G}_{1,2} = \mathrm{G}_{2,1} = 0$.
\end{example}

\subsection{Geometric Measure Theory} \label{subsec: GMT}

Any Riemannian manifold has a natural measure $V$ over its Borel algebra, called the \emph{Riemannian volume measure}, with infinitesimal volume element denoted $\mathrm{d}V$. 
In a coordinate patch $U_i \subset \mathbb{R}^d$, this measure can be expressed in terms of the Lebesgue measure: $\mathrm{d}V =\sqrt{\text{det}(\mathrm{G}(\bm{x}))} \lambda^d(\mathrm{d}\bm{q})$ where $V(f) = \int_M f \mathrm{d}V$. 
In particular, when $M$ is the Euclidean space this is just the Lebesgue measure, and when $M$ is an embedded manifold in $\mathbb R^m$, $V$ is the Hausdorff measure \citep{Federer1969}. 
When $M$ has a non-empty boundary $\partial M$, the latter  inherits a  measure 
by  noting  that $\partial M$ is a submanifold of $M$ and the restriction $\left. g \right|_{\partial M}$ of the metric $g$ turns it into a Riemannian manifold $(\partial M , \left. g \right|_{\partial M})$.
Its associated Riemannian volume measure is usually denoted  $i_{\bm{n}} \mathrm{d}V$ in the literature \cite{frankel2011geometry}.

\begin{example}
For the sphere $\mathbb{S}^2$, $\mathrm{d}V = \sin q_2 \mathrm{d} q_1 \mathrm{d} q_2$, where $\sin q_2$ is the area of the parallelogram spanned by $\partial_{q_1}, \partial_{q_2}$.
\end{example}

\subsection{Calculus on a Riemannian Manifold} \label{subsec: CRM}

To present a geometric, coordinate-independent construction of differential operators on manifolds would require either exterior calculus or the concept of a covariant derivative. 
To limit scope, we present two important differential operators in local coordinates and merely comment that the associated operators are in fact coordinate-independent; full details can be found in \cite{Bachman2006}. 
To this end, denote the gradient of a function $f : M \rightarrow \mathbb{R}$, assumed to exist, as
$$
\nabla f  \; = \; \sum_{i,j = 1}^d [\mathrm{G}^{-1}]_{ij} \frac{\partial f \circ \phi_j^{-1}}{\partial q_j} \partial_{q_i} .
$$
Likewise, define the divergence of a vector field $\bm{s} = s_1 \partial_{q_1} + \dots + s_d  \partial_{q_d}$ with $s_i = s_i(\bm{x})$, assumed to exist, as
$$
\nabla \cdot \bm{s} \; =\; \sum_{i=1}^d \frac{\partial}{\partial q_i} s_i + s_i \frac{\partial}{\partial q_i} \log \sqrt{\text{det}(\mathrm{G})}.
$$
These two differential operators are sufficient for our work; for instance, they can be combined to obtain the Laplace-Beltrami operator $\Delta f := \nabla \cdot \nabla f$.

\subsection{Reproducing Kernel Hilbert Spaces} \label{subsec: RKHS}

A reproducing kernel Hilbert space (RKHS) $\mathcal H$ of functions on $M$ is a Hilbert space for which the evaluation functionals $\delta_{\bm{x}}: \mathcal H \rightarrow \mathbb R$, $\delta_{\bm{x}}(f):= f(\bm{x})$, are continuous for each $\bm{x} \in M$. 
The dual space of bounded linear operators $L : \mathcal{H} \rightarrow \mathbb{R}$ is denoted $\mathcal{H}^*$.
The Riesz-representation theorem implies that $\mathcal{H}$ is isomorphic to $\mathcal H^*$ and we can thus associate a vector $k_{\bm{x}} \in \mathcal H$ to $\delta_{\bm{x}}$ which satisfies $f(\bm{x})= \langle f, k_{\bm{x}} \rangle_{\mathcal{H}}$. The symmetric function $k(\bm{x},\bm{y}):= k_{\bm{x}}(\bm{y})$ is called the \emph{reproducing kernel} for $\mathcal H$ and we denote this as $\mathcal{H}(k)$ in the sequel. 
It can be checked $k$ is a semi-positive definite function on $M \times M$. Moore's theorem states the converse is also true; any semi-positive function on $M$ defines an RKHS $\mathcal{H}(k)$ of functions on $M$ with $k$ as its reproducing kernel.
See e.g. \cite{Berlinet2011}.

\subsection{Sobolev Norm on a Riemannian Manifold} \label{subsubsec: sobolev norm}

Let $S \subset \mathbb R^d$ be an open set and recall that the standard \emph{Sobolev space} $W_2^s(S)$ is defined as the set of equivalence classes $f \in L^2(S)$ such that the weak derivatives $\mathrm{D}^\alpha f := \partial_{x_1}^{\alpha_1} \dots \partial_{x_d}^{\alpha_d} f \in L^2(S)$ for all $|\alpha| := \alpha_1 + \dots + \alpha_d \leq s$. 
The set $W_2^s(S)$, for $s > d/2$, becomes a RKHS when equipped with the norm\footnote{In a slight abuse of notation, for $f \in C(S, \mathbb{R})$ we interpret $\|f\|_{W_2^s(S)}$ as $\|\iota \circ f\|_{W_2^s(S)}$ where $\iota \circ f$ maps $f$ to its equivalence class in $W_2^s(S)$.} 
$$
\|f\|_{W^s_2(S)} := \Big( \sum_{ | \alpha | \leq s} \| \mathrm{D}^\alpha f \|^2 _{L^2(S)} \Big)^{\frac{1}{2}} .
$$
In order to define a Sobolev space on a Riemannian manifold $(M,g)$, let $(U_i, \phi_i)$ be a  $C^\infty$ atlas such that the $(U_i)$ form an open cover of $M$ and $(\rho_i)$ a partition of unity subordinate to $(U_i)$.
Then let $W_2^s(M)$ be the set of real-valued functions on $M$ for which the norm
\begin{eqnarray*}
\|f\|_{W^s_2(M)} & := & \Big( \sum_i \| (\rho_i f)\circ \phi^{-1}_i \|_ {W^s_2(\mathbb R^d)}^2 \Big)^{\frac{1}{2}}
\end{eqnarray*}
is finite.
It can be shown that $W_2^s(M)$ is a RKHS \citep{Fuselier2012}.
Note that the norm depends on the choice of atlas  and partition of unity. Different choices lead to different norms, however these are all equivalent \citep{Fuselier2012}. 
To avoid confusion, we fix a specific atlas and partition of unity in the sequel.
Further discussion can be found in \cite{DeVito2019}.
The restriction to compact manifolds is fundamental to our analysis, as Sobolev norms on general manifolds are not equivalent,  see section 3.2 
\citep{grosse2013sobolev}.
This completes our mathematical background.

\section{Results} \label{subsec: statement}

In this section our novel theoretical results are stated.
First in \Cref{subsec: stein pair}, we present and discuss the technical conditions that will be assumed.
In \Cref{subsec: comput MT} we explain how the choice of regularisers $R_1$, $R_2$ in \eqref{eq: Stein kernel method} allows the estimator $P_X(f)$ in \eqref{eq: Stein method} to be computed.
\Cref{subsec: connect to KSD} explains how the error of the Stein kernel interpolant can be quantified.
Then in \Cref{subsec: equiv kernels} we establish an equivalence between the Stein kernel interpolant and approximation using Sobolev spaces on a Riemannian manifold.
Finally, in \Cref{subsec: fsseb}, explicit sufficient conditions for the convergence of the estimator $P_X(f)$ are established.

\subsection{Stein Pair on a Riemannian Manifold} \label{subsec: stein pair}

The first task in this section is present and discuss the technical conditions that will be assumed on $\mathcal{H}$ and $\tau$ to ensure that $(\mathcal{H},\tau)$ is a Stein pair for $P$.
Two normed spaces $(X,\|\cdot\|_X)$, $(Y,\|\cdot\|_Y)$ are said to be \emph{equivalent} if the sets $X$ and $Y$ are equal and if there exists $0 < C < \infty$ such that $C^{-1} \|x\|_X \leq \|x\|_Y \leq C \|x\|_X$ for all $x \in X$.

\begin{assumption} \label{asm: Stein class}
Let $k : M \times M \rightarrow \mathbb{R}$ be positive definite, with $\mathcal{H}(k)$ equivalent to $W_2^{s+2}(M)$ for some $s > d/2$.
\end{assumption}

In this work a density $p$ for the distribution $P$ with respect to the Riemannian volume measure $V$ is required.
It will be assumed that $p$ is positive on $M$ and that a number of derivatives of $p$ exist:

\begin{assumption} \label{ass: density}
The function $\log p : M \rightarrow \mathbb{R}$ is $C^{s+1}(M,\mathbb{R})$, for the same exponent $s$ introduced in \Cref{asm: Stein class}.
\end{assumption}

The Stein pair that we consider is based on the Riemannian manifold generalisation of the original differential operator considered in \cite{Assaraf1999}:
\begin{assumption} \label{asm: Stein operator}
The operator $\tau : \mathcal{H} \rightarrow C^s(M,\mathbb{R})$ is the second order differential operator $\tau h = \nabla \cdot(p \nabla h) / p$.
\end{assumption}
\noindent Assumptions \ref{asm: Stein class} and \ref{ass: density} ensure that $\tau$ in \Cref{asm: Stein operator} is well-defined.
Indeed, from the product rule 
$$
\tau h = \frac{p \nabla \cdot \nabla h + g(\nabla p, \nabla h)}{p} \; = \; \Delta h + g \big( \nabla \log p,  \nabla h \big) ,
$$
from \Cref{ass: density} $\nabla \log p \in C^s(M,\mathbb{R})$,
and from tthe Sobolev embedding theorem $\Delta h \in C^s(M,\mathbb{R})$ whenever $h \in W_2^{s+2}(M)$ and $s > d/2$ \cite[Theorem 2.7]{Hebey2000}, which is \Cref{asm: Stein class}.
In what follows, the operator in \Cref{asm: Stein operator} will be called the \emph{Riemann--Stein operator} due to its suitability for Stein's method on a Riemannian manifold.
The Riemann--Stein operator is an example of a weighted Laplacian operator; see \cite{Grigor'yan2006}.
This operator coincides with the operator studied in the concurrent work of \cite{Liu2017,xu2020stein,Le2020}.

\begin{assumption} \label{asm: BC}
If $(M,g)$ is a Riemannian manifold with boundary $\partial M$, then 
\begin{eqnarray*}
\int_{\partial M} g \big( p \nabla h,\bm{n} \big) \; i_{\bm{n}} \mathrm{d}V  & = & 0 \qquad \forall h \in \mathcal{H}.
\end{eqnarray*}
\end{assumption}
\noindent For a manifold with boundary, the boundary condition in \Cref{asm: BC} is either automatically satisfied if $p$ vanishes on $\partial M$ or must be enforced through a suitable restriction on $\mathcal{H}$.
On the other hand, if $M$ is a closed manifold, then no assumption is required.

\begin{proposition} \label{prop: WSO}
$(\mathcal{H},\tau)$ is a Stein pair for $P$.
\end{proposition}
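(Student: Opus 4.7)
The plan is to verify only the $\Rightarrow$ implication of the Stein characterisation, since Proposition \ref{prop: WSO} asserts a Stein \emph{pair} (not a full characterisation). Thus it suffices to show that $\int_M \tau h \, \mathrm{d}P = 0$ for every $h \in \mathcal{H}(k)$, when $\tilde P = P$.

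First I would substitute $\mathrm{d}P = p \, \mathrm{d}V$ and the definition $\tau h = \nabla \cdot (p \nabla h)/p$ from Assumption \ref{asm: Stein operator}, so that the density cancels:
\begin{equation*}
\int_M \tau h \, \mathrm{d}P \;=\; \int_M \frac{\nabla \cdot (p \nabla h)}{p}\, p \, \mathrm{d}V \;=\; \int_M \nabla \cdot (p \nabla h) \, \mathrm{d}V .
\end{equation*}
Before invoking Stokes/divergence I need to confirm that the vector field $p \nabla h$ is regular enough for the Riemannian divergence theorem. Assumption \ref{ass: density} gives $\log p \in C^{s+1}(M)$, so $p = \exp(\log p) \in C^{s+1}(M)$. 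Assumption \ref{asm: Stein class} together with Sobolev embedding on the compact manifold $M$ (since $s > d/2$) yields $\mathcal{H}(k) \hookrightarrow W_2^{s+2}(M) \hookrightarrow C^2(M)$, so $\nabla h$ is a $C^1$ vector field and $p \nabla h$ is likewise $C^1$.

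Next I would apply the divergence theorem on the compact oriented Riemannian manifold $(M,g)$ to obtain
\begin{equation*}
\int_M \nabla \cdot (p \nabla h) \, \mathrm{d}V \;=\; \int_{\partial M} g(p \nabla h, \bm{n}) \, i_{\bm n} \mathrm{d}V ,
\end{equation*}
where the right-hand side is interpreted as zero in the closed case $\partial M = \emptyset$. Now I would split into two cases. If $M$ is closed, the boundary term is vacuous. If $M$ has a boundary, Assumption \ref{asm: BC} asserts precisely that this boundary integral vanishes for every $h \in \mathcal{H}(k)$. In either situation we conclude $\int_M \tau h \, \mathrm{d}P = 0$ for all $h \in \mathcal{H}(k)$, which is exactly the $\Rightarrow$ direction in the definition of a Stein pair.

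The only non-routine step is the verification of the regularity needed to apply the divergence theorem; there is essentially no obstacle beyond bookkeeping, because the Sobolev embedding on a compact manifold together with Assumption \ref{ass: density} forces $p \nabla h \in C^1$ as a global vector field. Everything else is a straightforward application of the product/quotient structure of $\tau$ and the hypotheses already imposed.
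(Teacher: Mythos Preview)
Your proof is correct and follows essentially the same route as the paper: cancel $p$ against $\mathrm{d}P$, apply the Riemannian divergence theorem, and invoke Assumption~\ref{asm: BC} (or closedness of $M$) to kill the boundary term. The only difference is that you spell out the regularity check on $p\nabla h$ via Sobolev embedding, which the paper leaves implicit.
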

\begin{proof}
From \Cref{asm: Stein class,ass: density} the operator $\tau$ in \Cref{asm: Stein operator} is well-defined.
Thus for $h \in \mathcal{H}$, using the above assumptions and the divergence theorem on a Riemannian manifold \citep[see e.g.][]{Szekeres2004course}, we find
\begin{eqnarray}
 \int_M \tau h \; \mathrm{d}P & = &  \int_M \nabla \cdot(p \nabla h) \; \mathrm{d}V \nonumber \\
& = & \left\{ \begin{array}{ll} \int_{\partial M} g \big(  p \nabla h, \bm  n) \;  i_{\bm n}\mathrm{d}V & \qquad \text{if } M \text{ is a manifold with boundary} \\
0 & \qquad \text{if } M \text{ is a closed manifold.} \end{array} \right.   \label{eq: bdr intgrl}
\end{eqnarray}
Finally, in the case of a manifold with boundary, \Cref{asm: BC} ensures the boundary integral in \eqref{eq: bdr intgrl} is zero, as required.
\end{proof}

\subsection{Exact Computation of \eqref{eq: Stein method}} \label{subsec: comput MT}

Our assumptions also ensure that the estimator $P_X(f)$ can be computed.
The computations are analogous to those of traditional kriging \citep{Stein2012}, albeit based on a non-standard, non-radial kernel.
It will be assumed that the point set $X \subset M$ has already been generated.
No specific requirements are needed on $X$ in order for the estimator to be computed, however it will be assumed that its elements are distinct.
First, define the function $k_P : M \times M \rightarrow \mathbb{R}$ as $k_P(\bm{x},\bm{x}') := \tau' \tau k(\bm{x},\bm{x}')$ and, for $\sigma > 0$, define the function $k_{P,\sigma} : M \times M \rightarrow \mathbb{R}$ as $k_{P,\sigma}(\bm{x},\bm{x}') = \sigma^2 + k_P(\bm{x},\bm{x}')$, where in each case the Riemann--Stein operators $\tau$ and $\tau'$ act, respectively, on the first and second argument of the kernel.
The representer theorem, together with basic properties of RKHS, implies that a function $f_X = \xi + \tau h$ that solves \eqref{eq: Stein method} is an element of $\mathcal{H}(k_{P,\sigma})$ and identical calculations to those in \cite{Oates2017} establish that
\begin{eqnarray}
P_X(f) & = & \sigma^2 \mathbf{1}^\top \mathbf{K}_{P,\sigma}^{-1} \mathbf{f} \label{eq: the estimator}
\end{eqnarray}
where $[\mathbf{K}_{P,\sigma}]_{ij} := k_{P,\sigma}(\bm{x}_i,\bm{x}_j)$ and $[\mathbf{f}]_i := f(\bm{x}_i)$.
The requirement that elements of $X$ are distinct, together with the fact that $k_{P,\sigma}$ is equivalent to a known positive definite kernel (see \Cref{thm: equivalent kernels}), ensure that $\mathbf{K}_{P,\sigma}$ is non-singular.
The computation of \eqref{eq: the estimator} is associated with a $O(n^3)$ cost, which can be contrasted with the $O(n)$ cost of generating a point set $X$ using MCMC.
An increased cost (e.g. relative to MCMC) is typical for methods with accelerated convergence and can be justified when the convergence rate is sufficiently fast relative to the computational cost.

From \eqref{eq: the estimator}, the operator $P_X$ is seen to be linear.
In particular, it is recognised as a weighted \emph{cubature rule} $P_X(f) = \sum_{i=1}^n w_i f(\bm{x}_i)$ with weights $\bm{w} = [w_1,\dots,w_n]^\top$ the solution to $\mathbf{K}_{P,\sigma} \bm{w} = \sigma^2 \mathbf{1}$.
In practice, the parameter $\sigma$ can either be set in a data-driven manner or eliminated altogether in such a way that constant functions are exactly integrated:

\begin{proposition} \label{thm: limit}
Let $\mathbf{K}_P$ denote the $n \times n$ matrix with entries $k_P(\bm{x}_i,\bm{x}_j)$.
Then
\begin{eqnarray}
\lim_{\sigma \rightarrow \infty} P_X(f) & = & \left( \frac{\mathbf{K}_P^{-1} \mathbf{1}}{\mathbf{1}^\top \mathbf{K}_P^{-1} \mathbf{1}} \right)^\top \mathbf{f} . \label{eq: estimator}
\end{eqnarray}
\end{proposition}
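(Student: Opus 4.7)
The plan is straightforward since everything reduces to a rank-one inverse computation. From the formula $k_{P,\sigma}(\bm{x},\bm{x}') = \sigma^2 + k_P(\bm{x},\bm{x}')$ supplied in Sec.~\ref{subsec: computation of IX}, the Gram matrix splits as
\begin{eqnarray*}
\mathbf{K}_{P,\sigma} & = & \mathbf{K}_P + \sigma^2 \mathbf{1}\mathbf{1}^\top ,
\end{eqnarray*}
so $\mathbf{K}_{P,\sigma}$ is the rank-one perturbation of $\mathbf{K}_P$ by $\sigma^2 \mathbf{1}\mathbf{1}^\top$. The paper has already established that $k_P$ is symmetric positive definite (Sec.~\ref{subsec: stein rkhs}), hence $\mathbf{K}_P$ is invertible on the distinct point set $X$, and the scalar $a := \mathbf{1}^\top \mathbf{K}_P^{-1} \mathbf{1}$ is strictly positive.

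Next I would invoke the Sherman--Morrison identity to write
\begin{eqnarray*}
\mathbf{K}_{P,\sigma}^{-1} & = & \mathbf{K}_P^{-1} \; - \; \frac{\sigma^2 \, \mathbf{K}_P^{-1} \mathbf{1} \mathbf{1}^\top \mathbf{K}_P^{-1}}{1 + \sigma^2 a} .
\end{eqnarray*}
Left-multiplying by $\sigma^2 \mathbf{1}^\top$ and collecting terms gives the scalar-valued simplification
\begin{eqnarray*}
\sigma^2 \mathbf{1}^\top \mathbf{K}_{P,\sigma}^{-1} & = & \sigma^2 \mathbf{1}^\top \mathbf{K}_P^{-1} \left( 1 - \frac{\sigma^2 a}{1 + \sigma^2 a} \right) \; = \; \frac{\sigma^2}{1 + \sigma^2 a} \, \mathbf{1}^\top \mathbf{K}_P^{-1} .
\end{eqnarray*}

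Finally, substituting into Eqn.~\ref{eq: the estimator} and letting $\sigma \to \infty$, the prefactor $\sigma^2/(1+\sigma^2 a) \to 1/a$, which yields
\begin{eqnarray*}
\lim_{\sigma \to \infty} I_X(f) & = & \frac{1}{a} \, \mathbf{1}^\top \mathbf{K}_P^{-1} \mathbf{f} \; = \; \left( \frac{\mathbf{K}_P^{-1} \mathbf{1}}{\mathbf{1}^\top \mathbf{K}_P^{-1} \mathbf{1}} \right)^\top \mathbf{f} ,
\end{eqnarray*}
using symmetry of $\mathbf{K}_P$ in the last step. There is no real obstacle: the only thing that needs some care is confirming that $\mathbf{K}_P$ is invertible (so that Sherman--Morrison can be applied and $a > 0$), but this is immediate from the strict positive definiteness of $k_P$ already noted in the excerpt. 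Everything else is a one-line algebraic manipulation.
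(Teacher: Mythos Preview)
Your proof is correct and is essentially the same as the paper's: both recognise $\mathbf{K}_{P,\sigma} = \mathbf{K}_P + \sigma^2 \mathbf{1}\mathbf{1}^\top$ as a rank-one update and invert it via Sherman--Morrison/Woodbury to obtain $I_X(f) = \mathbf{1}^\top \mathbf{K}_P^{-1}\mathbf{f}/(\sigma^{-2} + \mathbf{1}^\top \mathbf{K}_P^{-1}\mathbf{1})$, from which the limit is immediate. The only cosmetic difference is that the paper writes the intermediate expression with $\sigma^{-2}$ in the denominator rather than $\sigma^2/(1+\sigma^2 a)$ as a prefactor.
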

\begin{proof} 
Note that $\mathbf{K}_{P,\sigma} = \sigma^2 \mathbf{1} \mathbf{1}^\top + \mathbf{K}_P$.
The proof is then an application of the Woodbury matrix inversion formula, which can be used to deduce that
\begin{eqnarray*}
P_X(f) \; = \; \sigma^2 \mathbf{1}^\top (\sigma^2 \mathbf{1} \mathbf{1}^\top + \mathbf{K}_P)^{-1} \mathbf{f} \; = \; \frac{\mathbf{1}^\top \mathbf{K}_P^{-1} \mathbf{f}}{\sigma^{-2} + \mathbf{1}^\top \mathbf{K}_P^{-1} \mathbf{1}} 
\end{eqnarray*}
from which the result is immediately established.
\end{proof}

\subsection{Connection to Kernel Stein Discrepancy} \label{subsec: connect to KSD}

In addition to returning an estimate for the integral, the Riemann--Stein kernel method is accompanied by a parsimonious error assessment.
Indeed, from the Moore--Aronszajn theorem, the kernel $k_{P,\sigma}$ induces a RKHS, denoted $\mathcal{H}(k_{P,\sigma})$, and the \emph{worst case error} of the cubature rule in \eqref{eq: the estimator} in the unit ball of $\mathcal{H}(k_{P,\sigma})$ is given by
\begin{eqnarray}
\sup \left\{ | P_X(f) - \textstyle \int_M f \mathrm{d}P | \; : \; \|f\|_{\mathcal{H}(k_{P,\sigma})} \leq 1 \right\} & = & (\sigma^{-2} + \mathbf{1}^\top \mathbf{K}_P^{-1} \mathbf{1})^{-1/2}  . \label{eq: wce}
\end{eqnarray}
The expression $(\mathbf{1}^\top \mathbf{K}_P^{-1} \mathbf{1})^{-1/2}$, which is recovered in the $\sigma \rightarrow \infty$ limit, can be interpreted as a {\it kernel Stein discrepancy} \citep[KSD;][]{Chwialkowski2016,Liu2016b,Gorham2017}
\begin{eqnarray}
\text{KSD}\left( \sum_{i=1}^n w_i \delta(\bm{x}_i) , P \right) \; := \; \sqrt{ \sum_{i,j = 1}^n w_i w_j k_P(\bm{x}_i , \bm{x}_j) } \label{eq: KSD general}
\end{eqnarray}
for the specific choice of weights
\begin{eqnarray}
\bm{w} & = & \frac{\mathbf{K}_P^{-1} \mathbf{1}}{\mathbf{1}^\top \mathbf{K}_P^{-1} \mathbf{1}}  \label{eq: ap weights}
\end{eqnarray}
that arise in \Cref{thm: limit}.
Indeed, minimisation of \eqref{eq: KSD general} over the weights $\bm{w}$ subject to the non-degeneracy constraint $\bm{1}^\top \bm{w} = 1$ leads to \eqref{eq: ap weights}, so that these weights are in a sense optimal.
Under certain conditions on $P$ and $k$, KSD controls the standard notion of weak convergence of the weighted empirical measure $\sum_{i=1}^n w_i \delta(\bm{x}_i)$ to the target $P$.
Sufficient conditions for the case $M = \mathbb{R}^d$ and a first order differential operator were established in \cite{Gorham2017,Chen2018}.

\subsection{First Main Result: Stein and Sobolev Kernels are Equivalent} \label{subsec: equiv kernels}

Our analysis of the Stein kernel interpolant is rooted in a more fundamental result that establishes that the reproducing kernel Hilbert space associated to $k_{P,\sigma}$ is equivalent to a standard Sobolev space on $M$:

\begin{theorem}[First Main Result] \label{thm: equivalent kernels}
The reproducing kernel Hilbert spaces $\mathcal{H}(k_{P,\sigma})$ and $\mathcal{H}(k)$ are equivalent, meaning that they are equal as sets and that there exists a constant $0 < C < \infty$ such that $C^{-1} \|x\|_{\mathcal{H}(k)} \leq \|x\|_{\mathcal{H}(k_{P,\sigma})} \leq C \|x\|_{\mathcal{H}(k)}$, for all $x \in \mathcal{H}_k$.
\end{theorem}

\noindent All proofs are provided in Appendix \ref{sec: proofs}.
Only one direction of this equivalence had been shown in earlier work; the embedding of $\mathcal{H}(k_{P,\sigma})$ into $\mathcal{H}(k)$ was shown in the Euclidean setting in \cite{Oates2018}.
The main effort in our proof is to establish the converse direction; i.e. that the \emph{Stein equation} $f = \xi + \tau h$ has a solution $h \in \mathcal{H}(k)$ and $\xi \in \mathcal H(\sigma^2)$ for each $f \in W_2^s(M)$.
The argument makes use of the theory of partial differential equations on a Riemannian manifold.

\Cref{thm: equivalent kernels} implies that the worst case error in \eqref{eq: wce} is equivalent to
\begin{eqnarray}
\sup \left\{ | P_X(f) - \textstyle\int_M f \mathrm{d}P | \; : \; \|f\|_{W_2^s(M)} \leq 1 \right\} , \label{eq: Sobolev wce}
\end{eqnarray}
the worst case error over the unit ball of $W_2^s(M)$.
In particular, \eqref{eq: wce} converges to $0$ only when $\sum_{i=1}^n w_i \delta(\bm{x}_i)$ converges weakly to $P$, since equivalent normed spaces have equivalent dual spaces and the \emph{Sobolev discrepancy} in \eqref{eq: Sobolev wce} controls weak convergence to $P$ (see the proof of \Cref{cor: KSD Sobolev}).
Likewise, since $\mathcal{H}(k_P)$ is equivalent to the quotient space $\mathcal{H}(k_{P,\sigma}) / \mathcal{H}(\sigma^2)$ and $\mathcal{H}(\sigma^2)$ is just a space of constant functions on $M$, it follows that $\mathcal{H}(k_P)$ is equivalent to the quotient space $W_2^s(M) / \mathcal{H}(\sigma^2)$.
This provides new insight into KSD; we have shown that the KSD in \eqref{eq: KSD general} is equivalent to the worst case integration error over the unit ball in $W_2^s(M) / \mathcal{H}(\sigma^2)$.

\begin{corollary} \label{cor: KSD Sobolev}
For any sequence $(P_n)_{n \in \mathbb{N}} \subset \mathcal{P}(M)$ of discrete measures $P_n := \sum_{i=1}^n w_i^{(n)} \delta(\bm{x}_i^{(n)})$ with $\sum_{i=1}^n w_i^{(n)} = 1$ and $\bm{x}_i^{(n)} \in M$, we have $\text{\normalfont KSD}( P_n , P )  \rightarrow 0 $ if and only if $P_n$ converges weakly to $P$.
\end{corollary}

\subsection{Second Main Result: Error Bounds for the Riemann--Stein Kernel Method} \label{subsec: fsseb}

In this section we demonstrate how \Cref{thm: equivalent kernels} leads to explicit finite-sample-size error bounds for the Riemann--Stein kernel method.
To state our result, the quality of a point set $X = \{\bm{x}_i\}_{i=1}^n$ must be quantified.
For the purposes of this work, we require that $X$ covers the manifold $M$ in the following sense \citep[see e.g.][]{Scheuerer2013}:
\begin{definition}
The \emph{fill distance} of a set $X = \{\bm{x}_i\}_{i=1}^n \subset M$  of points in $M$ is defined as $h_X := \max_{\bm{x} \in M} \; \min_{i = 1,\dots,n} d_M(\bm{x},\bm{x}_i)$, where $d_M$ is the geodesic distance on the Riemannian manifold.
\end{definition}

The distribution $P$ defines a linear operator on $C(M,\mathbb{R})$ that we denote, in a small abuse of notation, by $P(f) := \int_M f \mathrm{d}P$.
A general error bound can now be stated:

\begin{theorem} \label{thm: main result}
For all $f \in W_2^s(M)$,
\begin{eqnarray}
|P_X(f) - P(f)| & \leq & C_s h_X^s \|f\|_{W_2^s(M)} ,  \label{eq: specification of result}
\end{eqnarray}
where $C_s$ is a constant depending on $s$ but independent of $f$ and of the point set $X$.
\end{theorem}

\noindent Thus when the points in $X$ cover the manifold $M$, in the sense that the fill distance $h_X$ is small, the estimator $P_X(f)$ is an accurate approximation to the true integral.
For $P$ equivalent (in the sense of measures) to the natural volume measure $V$, an information-theoretic lower bound on the worst case error for any estimator $P_X$, based on a size $n$ point set $X$, is $C n^{-s/d}$, for some $C > 0$ \citep{Brandolini2014}.
This shows that the Riemann--Stein estimator $P_X$ is rate-optimal whenever the point set $X$ is selected such that the fill-distance is asymptotically minimised, i.e. $h_X = O(n^{- 1/d})$.
This is in principle a weak requirement, as under suitable conditions even independently sampled $\bm{x}_i \sim \bar{V}$, where $\bar{V}$ denotes the normalised Riemannian measure on the manifold, achieves this rate up to a logarithmic factor; see \cite[][Thm. 3.2, Cor. 3.3]{Reznikov2015} and \cite{Ehler2017}.

The most popular approaches to Bayesian computation are based on sampling, in particular MCMC.
Next we present the consequences of \Cref{thm: main result} in the case where the point set $X$ arises as MCMC output.
For measurable $A \subseteq M$ we let $P(A) := P(1_A)$ where $1_A(\bm{x}) = 1$ if $\bm{x} \in M$, otherwise $1_A(\bm{x}) = 0$.
Recall that notions of geometric and uniform ergodicity coincide on a compact state space when the invariant measure $P$ is equivalent (in the sense of measures) to $V$; we therefore describe a Markov chain $(\bm{x}_i)_{i \in \mathbb{N}}$ with $n$th step transition kernel $P^n$ and invariant distribution $P$ simply as \emph{ergodic} if there exists a finite constant $C$ and a number $0 \leq \rho < 1$ such that $|P^n(\bm{x}_0,A) - P(A)| \leq C \rho^n$ for all $\bm{x}_0 \in M$ and all measurable $A \subseteq M$.
The reader is referred to \cite{Meyn2012} for background.

\begin{corollary}[Second Main Result] \label{cor: MCMC}
Suppose that $(\bm{x}_i)_{i \in \mathbb{N}}$ is an ergodic Markov chain with invariant distribution $P$, initialised at an arbitrary point $\bm{x}_0 \in M$.
Let $X = \{\bm{x}_i\}_{i=1}^n$ and denote expectation with respect to the sampling distribution of $X$ as $\mathbb{E}$.
Then
\begin{eqnarray*}
\mathbb{E} |P_X(f) - P(f)| & \leq & C_s' n^{-\frac{s}{d}} \log(n)^{\frac{s}{d}} \|f\|_{W_2^s(M)} ,
\end{eqnarray*} 
where $C_s'$ is a constant depending on $s$ but independent of $f$.
\end{corollary}
\noindent 
This result shows that the Riemann--Stein kernel method provides an asymptotically more accurate approximation of $P(f)$ compared to using MCMC whenever $s > d/2$ (which was also assumed in \Cref{asm: Stein class}).
Note that the constant $C_s'$ is dependent on $P$ through the infimum of $p(\bm{x})$ on $\bm{x} \in M$ and could be large when $P$ is highly concentrated.
If the $O(n^3)$ computational cost is also taken into account, the Riemann--Stein method asymptotically out-performs MCMC on an error-per-cost basis only when $s > d$, following an identical argument to Sec. 1 of \cite{Oates2018}.
As usual, the case of high-dimensional manifolds (i.e. $d$ large) is likely to challenge any interpolation-based method unless stronger assumptions can be made on the integrand.

\section{Discussion} \label{sec: discussion}

This paper adds to the growing literature on computational uses for Stein's method.
Our contribution provides both a manifold generalisation and a refined theoretical analysis of the Stein kernel interpolant.
This was achieved by characterising kernel Stein discrepancy as a Sobolev discrepancy in the context of a compact manifold, and this characterisation may be of independent interest.
For example, \cite{mroueh2017sobolev} proposed to use Sobolev discrepancy to train generative adversarial networks (GAN); our result justifies the alternative use of KSD for training GANs in a manifold context.
See also \cite{mroueh2018regularized,mroueh2019sobolev,arbel2019maximum,marzo2019discrepancy} for uses of Sobolev discrepancy, which could instead be achieved using KSD.
Our main contribution was theoretical, but for completeness our results are empirically verified on $M = \mathbb{S}^2$ in the electronic supplement.

Some extensions of this work could include; additional constructions to circumvent the need for gradient information on the target \citep{Han2018}; the simplification of computation when the density $p$ can be factorised \citep{Zhuo2018}; and extension to high- or infinite-dimensional spaces such as the Hilbert sphere $\mathbb{S}^\infty$ for functional data analysis on a manifold.

\section*{Acknowledgements}

CJO and MG were supported by the Lloyd's Register Foundation programme on data-centric engineering at the Alan Turing Institute, UK.
AB was supported by a Roth scholarship from the Department of Mathematics at Imperial College London, UK.
EP was partially supported by FONDECYT Grant [1170290], Chile, and by Iniciativa Cienti\'{i}fica Milenio - Minecon Nucleo Milenio MESCD.
MG was supported by the EPSRC grants [EP/K034154/1, EP/R018413/1, EP/P020720/1, EP/L014165/1], an EPSRC Established Career Fellowship [EP/J016934/1] and a Royal Academy of Engineering Research Chair in Data Centric Engineering. 
The authors are grateful for discussions with Andrew Duncan, Toni Karvonen, Chang Liu, Gustav Holzegel, Julio Delgado and Andrew Stuart.

\appendix

\section{Proofs of Theoretical Results} \label{sec: proofs}

This section contains proofs of the theoretical results presented in the main text.
To this end, there are two main theoretical challenges to be addressed:
First, it is necessary to establish that $k_{P,\sigma}$ is a valid kernel so that, from the Moore-Aronszajn theorem, $k_{P,\sigma}$ defines a RKHS.
This is addressed first in \Cref{subsec: stein rkhs}.
The RKHS $\mathcal{H}(k_{P,\sigma})$ will be called the \emph{Stein RKHS}.
The remainder of \Cref{subsec: stein rkhs} is devoted to proving \Cref{thm: equivalent kernels}, which establishes the equivalence of the Stein RKHS with a standard Sobolev space on the Riemannian manifold.
From this point onward, our interpolation error bounds are standard in the Sobolev space context and the proof for \Cref{thm: main result} is contained in \Cref{subsec: main proof sec}.
The corollary for MCMC are established in \Cref{subsec: cors for mcmc}.

\subsection{Proof of \Cref{thm: equivalent kernels}} \label{subsec: stein rkhs}

It is required to establish that $\mathcal{H}(k_{P,\sigma})$ is norm-equivalent to the Sobolev space $W_2^s(M)$.
This is performed in two parts, with a Sobolev embedding of the Stein RKHS  and a Stein embedding of the Sobolev RKHS.
For both parts we leverage results from the analysis of partial differential equations on a Riemannian manifold; our main reference is \cite{Grosse2017}.
The first result establishes how the kernel $k_P$ can be computed:

\begin{lemma}  \label{lem: compute kernel}
Let $\langle \cdot , \cdot \rangle_{\mathcal{H}(k)}$ denote the inner product in $\mathcal{H}(k)$.
Then $k_P(\bm{x},\bm{x}') \; := \; \tau' \tau k (\bm{x},\bm{x}') \; = \; \langle \tau k(\bm{x},\cdot) , \tau' k(\bm{x}',\cdot) \rangle_{\mathcal{H}(k)}$. 
In particular, $k_P$ is symmetric and semi-positive definite; i.e. $k_P$ is a kernel.
\end{lemma}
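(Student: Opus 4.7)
The plan is to derive the identity and the kernel properties from the reproducing property of $\mathcal{H}(k)$, together with an interchange of the differential operator $\tau$ with the inner product. As a preliminary observation, Assumption \ref{asm: Stein class} makes $\mathcal{H}(k)$ norm-equivalent to $W_2^{s+2}(M)$ with $s > d/2$, so Sobolev embedding gives $\mathcal{H}(k) \hookrightarrow C^2(M)$; combined with $\log p \in C^{s+1}(M)$ from Assumption \ref{ass: density}, the second-order operator $\tau h = \Delta h + g(\nabla\log p, \nabla h)$ is bounded from $\mathcal{H}(k)$ into $C^0(M)$. Consequently the evaluation-then-$\tau$ functional $L_{\bm{x}}: h \mapsto \tau h(\bm{x})$ is a bounded linear functional on $\mathcal{H}(k)$ for each $\bm{x}\in M$.

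The first substantive step is to identify the Riesz representer of $L_{\bm{x}}$ with the element $\tau k(\bm{x},\cdot)\in \mathcal{H}(k)$, obtained by applying $\tau$ to the slot of $k$ fixed at $\bm{x}$ and leaving the other free. Starting from the reproducing identity $h(\bm{x}) = \langle h, k(\bm{x},\cdot)\rangle_{\mathcal{H}(k)}$, and using that the map $\bm{x}\mapsto k(\bm{x},\cdot)$ is twice Fr\'echet differentiable as an $\mathcal{H}(k)$-valued function (a consequence of $k \in C^4(M \times M)$ via the Sobolev embedding of $\mathcal{H}(k)$), one passes $\tau$ through the inner product to get the ``generalised reproducing identity''
\[
\tau h(\bm{x}) \;=\; \langle h,\, \tau k(\bm{x},\cdot)\rangle_{\mathcal{H}(k)}, \qquad h \in \mathcal{H}(k),\; \bm{x} \in M .
\]

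The second step is to iterate. Since $\tau' k(\bm{x}',\cdot)$ is itself an element of $\mathcal{H}(k)$ by the same Riesz argument (with the roles of the two slots interchanged, which is harmless because $k$ is symmetric), substituting $h = \tau' k(\bm{x}',\cdot)$ into the identity above yields
\[
\tau'\tau k(\bm{x},\bm{x}') \;=\; \tau\bigl[\tau' k(\bm{x}',\cdot)\bigr](\bm{x}) \;=\; \bigl\langle \tau k(\bm{x},\cdot),\, \tau' k(\bm{x}',\cdot)\bigr\rangle_{\mathcal{H}(k)},
\]
which is the asserted identity. Symmetry of $k_P$ is then immediate from symmetry of the real inner product, and positive semi-definiteness is the standard Gram-matrix calculation
\[
\sum_{i,j=1}^n a_i a_j \, k_P(\bm{x}_i,\bm{x}_j) \;=\; \Bigl\|\sum_{i=1}^n a_i \, \tau k(\bm{x}_i,\cdot)\Bigr\|_{\mathcal{H}(k)}^2 \;\geq\; 0 .
\]

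The main obstacle I anticipate is the rigorous justification of passing $\tau$ through $\langle\cdot,\cdot\rangle_{\mathcal{H}(k)}$ at the generalised reproducing identity stage. I would handle this by verifying that $\bm{x}\mapsto k(\bm{x},\cdot)$ is $C^2$ as an $\mathcal{H}(k)$-valued map and that its $\mathcal{H}(k)$-valued derivatives agree pointwise with the classical derivatives of $k$; continuity of the inner product then lets one swap $\tau$ with $\langle\cdot,\cdot\rangle_{\mathcal{H}(k)}$ componentwise. This is essentially standard for RKHSs with smooth kernels, and is the only place in the argument where the Sobolev regularity in Assumptions \ref{asm: Stein class} and \ref{ass: density} is used in a nontrivial way.
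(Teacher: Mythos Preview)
Your proposal is correct. The overall architecture matches the paper's proof (regularity so that $\tau$ applies to $k(\bm{x},\cdot)$; the inner-product identity; the feature-map argument for symmetry and positive semi-definiteness), but the way you establish the core identity $\tau'\tau k(\bm{x},\bm{x}') = \langle \tau k(\bm{x},\cdot),\tau' k(\bm{x}',\cdot)\rangle_{\mathcal{H}(k)}$ differs in flavour. You go the abstract route: show $L_{\bm{x}}:h\mapsto \tau h(\bm{x})$ is bounded, identify its Riesz representer as $\tau k(\bm{x},\cdot)$ via differentiation through the inner product (the ``generalised reproducing identity''), and then iterate. The paper instead expands $\tau'\tau k$ explicitly in local coordinates using the musical isomorphism, writing out the four cross terms coming from $\tau h = \Delta h + g(\nabla\log p,\nabla h)$ and verifying by inspection that this coincides with the inner product of $\tau k(\bm{x},\cdot)$ and $\tau' k(\bm{x}',\cdot)$. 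Your argument is cleaner and more portable (it works verbatim for any bounded differential operator on $\mathcal{H}(k)$), and it makes transparent exactly where the smoothness is used; the paper's coordinate computation is more concrete but ultimately appeals to the same underlying fact, citing Lem.~4.34 of \cite{Steinwart2008}, which is precisely the generalised reproducing identity you prove directly.
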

\begin{proof}
Since $k$ reproduces $W_2^\alpha(M)$ for $\alpha = s + 2 > (d/2) + 2$, it follows from the Sobolev embedding theorem that $k(\bm{x},\cdot) \in C^2(M,\mathbb{R})$.
Thus second order differential operators, such as $\tau$, can be applied to this function \citep[Lem. 4.34 of][]{Steinwart2008}.

Let $g^{-1}$ be the metric tensor on differential forms associated to $g$ by the musical isomorphism \citep{gallot1990riemannian}. 
Then $g(\nabla f, \nabla h) = g^{-1}(\mathrm{d}f , \mathrm{d}h)$ where $\mathrm{d}h$ is the linear functional on $T_{\bm{x}}M$ such that $\mathrm{d}h(v) = v(h)$. 
Recall that $\tau'$ is used to denote the action of a differential operator $\tau$ on $\bm{x}'$.
It follows that,
\begin{eqnarray*}
g\big(\nabla' \log p , \nabla' g(\nabla \log p, \nabla k) \big) & = & g^{-1}\big( \mathrm{d}' \log p, \mathrm{d}' g^{-1}( \mathrm{d} \log p, \mathrm{d} k) \big) \\
& = & \sum_{i, j, r, l} g^{ij}(y) g^{rl}(x) \partial_{y^i} \log p(y) \partial_{x^r} \log p(x) \partial_{y^j} \partial_{x^l} k(x,y) \\
& = & \nabla \log p \nabla' \log p (k).
\end{eqnarray*}
Here $\nabla \log p(k)$ is the function on $M$ which maps $x$ to $\nabla_x \log p(k) \in \mathbb R$, where $\nabla_x \log p \in T_xM$ is the gradient vector. Thus 
$\nabla \log p \nabla' \log p (k) = \nabla \log p \big( \nabla' \log p (k) \big)$ is defined.
Thus 
\begin{eqnarray*}
\tau' \tau k(\bm{x} , \bm{x}') & = & \nabla \log p \nabla' \log p (k)+g^{-1} ( \mathrm{d} \log p, \mathrm{d} \Delta' k)+g^{-1} ( \mathrm{d}' \log p, \mathrm{d}' \Delta k) + \Delta \Delta' k \\
& = & \langle \tau k(\bm{x},\cdot), \tau' k(\bm{x}',\cdot) \rangle_{\mathcal{H}(k)}
\end{eqnarray*}
where local coordinates verify that the last equality is established.

Finally, observe that the map $\phi : M \rightarrow \mathcal{H}(k)$, defined as $\phi(\bm{x}) := \tau k(\bm{x},\cdot)$, is a \emph{feature map} for $k_P$, meaning that $k_P(\bm{x},\bm{x}') = \langle \phi(\bm{x}) , \phi(\bm{x}') \rangle_{\mathcal{H}(k)}$ for all $\bm{x},\bm{x}' \in M$.
It follows that $k_P$ is symmetric and semi-positive definite; indeed if $\{w_i\}_{i=1}^n \subset \mathbb{R}$ and $\{\bm{x}_i\}_{i=1}^n \subset M$ then $\sum_{i=1}^n \sum_{j=1}^n w_i w_j k_P(\bm{x}_i , \bm{x}_j) = \langle \sum_{i=1}^n w_i \phi(\bm{x}_i) , \sum_{j=1}^n w_j \phi(\bm{x}_j) \rangle_{\mathcal{H}(k)} = \| \sum_{i=1}^n w_i \phi(\bm{x}_i) \|_{\mathcal{H}(k)}^2 \geq 0$ as required.
\end{proof}

\begin{lemma}
$k_{P,\sigma}$ is symmetric and semi-positive definite; i.e. $k_{P,\sigma}$ is a kernel.
\end{lemma}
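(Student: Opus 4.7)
The plan is to observe that $k_{P,\sigma}$ decomposes as a sum of two symmetric positive semi-definite functions and then invoke the fact that such sums remain kernels. Explicitly, by definition $k_{P,\sigma}(\bm{x},\bm{x}') = \sigma^2 + \tau'\tau k(\bm{x},\bm{x}')$, and the second summand is precisely $k_P(\bm{x},\bm{x}')$, which has already been shown to be symmetric and semi-positive definite in Lem. \ref{lem: compute kernel}.

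First I would handle the constant term. Symmetry of $(\bm{x},\bm{x}') \mapsto \sigma^2$ is immediate, and for any finite collection $\{w_i\}_{i=1}^n \subset \mathbb{R}$ and $\{\bm{x}_i\}_{i=1}^n \subset M$ one has $\sum_{i,j=1}^n w_i w_j \sigma^2 = \sigma^2 \big(\sum_{i=1}^n w_i\big)^2 \geq 0$, so the constant function $\sigma^2$ is itself a valid (trivial) reproducing kernel, corresponding to the one-dimensional RKHS of constant functions on $M$.

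Next I would appeal to the standard fact that the pointwise sum of two symmetric semi-positive definite kernels is again a symmetric semi-positive definite kernel: symmetry is preserved termwise, and the associated Gram matrix of the sum is the sum of two semi-positive definite Gram matrices, hence semi-positive definite. Combining this with Lem. \ref{lem: compute kernel} gives the claim. There is no real obstacle here; the content of the lemma is entirely carried by the previous result, and the role of this step in the paper is simply to record that $k_{P,\sigma}$ qualifies as a reproducing kernel so that the Moore-Aronszajn theorem may be applied in Sec. \ref{subsec: sobolev embed stein} to define the Stein RKHS $\mathcal{H}(k_{P,\sigma})$.
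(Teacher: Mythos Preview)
Your argument is correct and essentially equivalent to the paper's. The only cosmetic difference is that the paper packages both summands into a single explicit feature map $\phi(\bm{x}) = [\sigma,\tau k(\bm{x},\cdot)]$ taking values in the product Hilbert space $\Phi = \mathbb{R}\times\mathcal{H}(k)$, whereas you invoke the closure of kernels under pointwise addition; these are the same fact viewed from two angles, since the feature space of a sum of kernels is the direct sum of the individual feature spaces.
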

\begin{proof}
Let $\Phi := \mathbb{R} \times \mathcal{H}(k)$ denote the Hilbert space with inner product $\langle (c_1,h_1) , (c_2,h_2) \rangle_\Phi := c_1 c_2 + \langle h_1 , h_2 \rangle_{\mathcal{H}(k)}$ for all $c_1,c_2 \in \mathbb{R}$ and all $h_1,h_2 \in \mathcal{H}(k)$.
Denote the associated norm $\| \cdot \|_\Phi$.
Then the map $\phi : M \rightarrow \Phi$, defined as $\phi(\bm{x}) := [\sigma, \tau k(\bm{x},\cdot)]$, is a \emph{feature map} for $k_{P,\sigma}$, meaning that $k_{P,\sigma}(\bm{x},\bm{x}') = \langle \phi(\bm{x}) , \phi(\bm{x}') \rangle_\Phi$ for all $\bm{x},\bm{x}' \in M$.
It follows that $k_{P,\sigma}$ is symmetric and semi-positive definite, as explained in the proof of \Cref{lem: compute kernel}.
\end{proof}

The Stein RKHS $\mathcal{H}(k_{P,\sigma})$ has the form $\mathcal{H}(\sigma^2) \oplus \mathcal{H}(k_P)$ where $\mathcal{H}(\sigma^2)$ is the RKHS with constant kernel $\sigma^2$.
Moreover, from Thm. 4.21 of \cite{Steinwart2008}, for $\zeta \in \mathcal{H}(k_P)$
\begin{eqnarray*}
\|\zeta \|_{\mathcal{H}(k_P)} & = & \inf \{ \|h\|_{\mathcal{H}(k)} : h \in \mathcal{H}, \; \tau h = \zeta \} .
\end{eqnarray*}
Thus, from Thm. 5 of \cite{Berlinet2011},
\begin{eqnarray}
\|f\|_{\mathcal{H}(k_{P,\sigma})}^2 & = & \inf \{ \sigma^{-2} \xi^2 + \|\zeta \|_{\mathcal{H}(k_P)}^2 : f = \xi + \zeta, \; \xi \in \mathbb{R}, \; \zeta \in \mathcal{H}(k_P) \} \label{eq: RKHS sum 2} \\
& = & \inf \{ \sigma^{-2} \xi^2 + \|h\|_{\mathcal{H}(k)}^2 : f = \xi + \tau h, \; \xi \in \mathbb{R}, \; h \in \mathcal{H}(k) \} . \label{eq: RKHS sum}
\end{eqnarray}
Note that, since the only constant function in $\mathcal{H}(k_P)$ is the zero function, the set over which the infimum is sought in \eqref{eq: RKHS sum 2} in fact contains a single element.
Let $\hat{f} = \hat{\xi} + \tau \hat{h}$ where
\begin{eqnarray}
(\hat{\xi},\hat{h}) & = & \arginf_{\xi \in \mathbb{R} , \; h \in \mathcal{H}(k)} \; \sum_{i=1}^n \Big( \xi + \tau h(\bm{x}_i) - f(\bm{x}_i) \Big)^2 + R_1(\xi) + R_2(h)  
\end{eqnarray}
and $R_1$, $R_2$ are defined in \eqref{eq: Stein kernel method}.
The $(\hat{\xi} , \hat{h})$ exist and are unique from the representer theorem \citep{Scholkopf2001}.
To understand the Riemann--Stein kernel method it suffices to study the convergence of $\hat{f}$ to $f$ in the Stein RKHS.
To this end, note at this point that the proof of the representer theorem shows that $\hat{f}$ is the orthogonal projection of $f$ onto the span of $\{k_{P,\sigma}(\cdot,\bm{x}_1), \ldots, k_{P,\sigma}(\cdot,\bm{x}_n) \}$ in $\mathcal{H}(k_{P,\sigma})$, and thus we have
\begin{eqnarray}
\| f - \hat{f} \|_{\mathcal{H}(k_{P,\sigma})} & \leq & \| f \|_{\mathcal{H}(k_{P,\sigma})}, \label{eq: best approximation property}
\end{eqnarray}
the so-called \emph{best approximation property}.

\Cref{thm: equivalent kernels} will now be proven in two parts; first we establish a Sobolev embedding of the Stein RKHS in \Cref{lem: Sobolev embed Stein} using a derivative counting argument, then a Stein embedding of the Sobolev RKHS is established in \Cref{lem: Stein embed Sobolev} using results from the theory of partical differential equations on a Riemannian manifold.

\begin{lemma}[Stein Embedding of Sobolev RKHS] \label{lem: Sobolev embed Stein}
The space $\mathcal{H}(k_{P,\sigma})$ is continuously embedded in $W_2^s(M)$.
i.e.  for some constant $0 < C < \infty$, we have $\|f\|_{W_2^s(M)} \leq C \|f\|_{\mathcal{H}(k_{P,\sigma})}$.
\end{lemma}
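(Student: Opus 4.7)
The plan is to reduce the claim to a mapping property of $\tau$ between Sobolev spaces, using the variational characterisation
$$
\|f\|_{\mathcal{H}(k_{P,\sigma})}^2 \;=\; \inf\bigl\{\sigma^{-2}\xi^2 + \|h\|_{\mathcal{H}(k)}^2 \,:\, f = \xi + \tau h,\; \xi \in \mathbb{R},\; h \in \mathcal{H}(k)\bigr\}
$$
established in Eqn. \ref{eq: RKHS sum}, together with Assumption \ref{asm: Stein class} which identifies $\mathcal{H}(k)$ with $W_2^{s+2}(M)$ up to equivalence of norms.

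The key step is to show that $\tau : W_2^{s+2}(M) \to W_2^s(M)$ is bounded. Writing $\tau h = \Delta h + g(\nabla\log p, \nabla h)$ as in Assumption \ref{asm: Stein operator}, the Laplace--Beltrami term $\Delta$ is a second-order differential operator whose coefficients are $C^\infty$ in local coordinates (they depend only on the smooth metric); applied chartwise and glued via the fixed partition of unity of Section \ref{subsubsec: sobolev norm}, standard Sobolev theory on $\mathbb{R}^d$ yields $\|\Delta h\|_{W_2^s(M)} \le C\|h\|_{W_2^{s+2}(M)}$. For the drift term, Assumption \ref{ass: density} guarantees $\nabla\log p \in C^s(M)$, and the multiplier estimate that $C^s$ functions act boundedly by multiplication on $W_2^s$ (applied chartwise), combined with the trivial inclusion $\nabla h \in W_2^{s+1}(M) \subset W_2^s(M)$, gives the same type of bound for $g(\nabla\log p, \nabla h)$.

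Once this mapping property is in hand, the proof is essentially one line. For any admissible decomposition $f = \xi + \tau h$ the triangle inequality, compactness of $M$ (giving $\|1\|_{W_2^s(M)} < \infty$) and the norm-equivalence in Assumption \ref{asm: Stein class} yield
$$
\|f\|_{W_2^s(M)} \;\le\; |\xi|\,\|1\|_{W_2^s(M)} + \|\tau h\|_{W_2^s(M)} \;\le\; C'\bigl(|\xi| + \|h\|_{\mathcal{H}(k)}\bigr),
$$
and Cauchy--Schwarz bounds the right-hand side by $C'\sqrt{\sigma^2+1}\,(\sigma^{-2}\xi^2 + \|h\|_{\mathcal{H}(k)}^2)^{1/2}$. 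Taking the infimum over all such $(\xi,h)$ and invoking the variational formula above produces $\|f\|_{W_2^s(M)} \le C\|f\|_{\mathcal{H}(k_{P,\sigma})}$ with $C$ depending on $\sigma$, on the norm-equivalence constant of $\mathcal{H}(k) \leftrightarrow W_2^{s+2}(M)$, on $\|\nabla\log p\|_{C^s(M)}$ and on $M$, but independent of $f$.

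The only genuine obstacle is the boundedness of $\tau$ for possibly non-integer $s > d/2$: this requires a careful multiplier and elliptic-mapping estimate on a compact Riemannian manifold, rather than on $\mathbb{R}^d$. The authors' reference \cite{Grosse2017} provides exactly the machinery needed, so in practice the work is to quote the appropriate local result and verify that the $C^{s+1}$ regularity of $\log p$ given by Assumption \ref{ass: density} is precisely what the multiplier estimate demands; everything else is bookkeeping.
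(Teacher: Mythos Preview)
Your proposal is correct and follows essentially the same route as the paper: both reduce the claim to the boundedness of $\tau : W_2^{s+2}(M) \to W_2^s(M)$, which the paper obtains directly from Lemma~2.4 of \cite{Grosse2017} (applied to $\tau = \tfrac{1}{p}D_a$ with section $a = p\,g$, requiring $p \in C^{s+1}(M)$), whereas you spell out the Laplacian-plus-drift decomposition before pointing to the same reference. The remaining steps---triangle inequality, compactness of $M$ to control the constant term, and passage from the admissible decomposition to the $\mathcal{H}(k_{P,\sigma})$-norm via the variational formula---match the paper's argument, with the only stylistic difference that the paper works with the specific minimiser $h'$ while you take an infimum over decompositions.
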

\begin{proof}

The elements of $\mathcal{H}(k_{P,\sigma})$ are all of the form $f = \xi + \tau h$ for $\xi \in \mathbb{R}$, $h \in \mathcal{H}(k)$, and therefore it is sufficient to show that there is a finite constant $C$ such that $\|\xi + \tau h \|_{W_2^s(M)} \leq C \|f\|_{\mathcal{H}(k_{P,\sigma})}$.
From the triangle inequality and the definition of Sobolev norm, $\|\xi + \tau h \|_{W_2^s(M)} \leq C_1( \xi + \|\tau h \|_{W_2^s(M)})$, where the constant $C_1$ depends on the volume of $M$.
Now, we also know that $\|f\|_{\mathcal{H}(k_{P,\sigma})}^2 = \sigma^{-2} \xi^2 + \|h'\|_{\mathcal{H}(k)}^2$ where $h'$ is the element of $\mathcal{H}(k)$ that minimises $\|h'\|_{\mathcal{H}(k)}$ subject to $\tau(h'-h) = 0$.
The result therefore follows if $\|\tau h'\|_{W_2^s(M)} \leq C \|h'\|_{W_2^{s+2}(M)}$. 
This is a consequence of Lemma 2.4 in \cite{Grosse2017}, and holds for the differential operator $\tau = (1/p) D_a$ whenever the section $a=pg(\cdot, \cdot)$ belongs to $W_\infty^s(M,T^*M\otimes T^*M)$; see \cite{baez1994gauge}. 
Here $D_a := \nabla \cdot (p \nabla )$ is the differential operator associated to $a$, and $a$ is defined as the map that sends a point $\bm{x} \in M$ to the bilinear form $p(\bm{x})g_{\bm{x}}:T_{\bm{x}} M\times T_{\bm{x}} M \to \mathbb R$.
Since $M$ is compact and $g$ is a smooth section, the condition holds for example whenever $p\in C^{s+1}(M,\mathbb{R})$.
The final condition is implied by \Cref{ass: density}. 
\end{proof}

The converse of \Cref{lem: Sobolev embed Stein} is now established:

\begin{lemma}[Sobolev Embedding of Stein RKHS] \label{lem: Stein embed Sobolev}
The space $W_2^s(M)$ is continuously embedded in $\mathcal{H}(k_{P,\sigma})$.
i.e. for some constant $0 < C < \infty$, we have $\|f\|_{\mathcal{H}(k_{P,\sigma})} \leq C \|f\|_{W_2^s(M)}$.
\end{lemma}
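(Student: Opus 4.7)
The natural vehicle for this direction of the embedding is the variational formula for the Stein RKHS norm in Eqn.~\ref{eq: RKHS sum}. Given $f \in W_2^s(M)$, it is enough to exhibit a single decomposition $f = \xi + \tau h$ with $\xi \in \mathbb{R}$ and $h \in \mathcal{H}(k)$, and then to control both $|\xi|$ and $\|h\|_{\mathcal{H}(k)}$ by $\|f\|_{W_2^s(M)}$. The plan is to make the obvious choice $\xi := I(f) = \int_M f\,\mathrm{d}P$ and then to construct $h$ by solving the elliptic PDE $\tau h = f - \xi$, using Assumption~\ref{asm: Stein class} to pass from a $W_2^{s+2}$-bound on $h$ to a bound on $\|h\|_{\mathcal{H}(k)}$.

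The constant term is handled first and easily: since $M$ is compact and $p$ is continuous (Assumption~\ref{ass: density} gives $\log p \in C^{s+1}(M)$, hence $p \in L^{\infty}(V)$), the Cauchy--Schwarz inequality yields $|\xi| \leq \|p\|_{L^\infty(V)}^{1/2} \|f\|_{L^2(V)} \leq C_1 \|f\|_{W_2^s(M)}$. Hence $\sigma^{-2}\xi^2 \lesssim \|f\|_{W_2^s(M)}^2$.

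The substantial work is the construction of $h$. Setting $g := f - \xi \in W_2^s(M)$, the equation $\tau h = g$ is equivalent to the weighted divergence-form problem $L h := \nabla \cdot (p \nabla h) = p g$. Because $\log p \in C^{s+1}(M)$ the coefficient $p$ is bounded above and below by positive constants on the compact manifold $M$, so $L$ is uniformly elliptic with principal symbol equivalent to that of $\Delta$. By connectedness of $M$, $\ker L = \mathbb{R}\cdot 1$; by the divergence theorem (and Assumption~\ref{asm: BC} in the boundary case, interpreted via a pointwise Neumann condition $g(p\nabla h,\bm n)|_{\partial M} = 0$), $L$ is formally self-adjoint on $L^2(V)$ and its range in $W_2^s(M)$ is exactly $\{u : \int_M u\,\mathrm{d}V = 0\}$. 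By construction
\begin{eqnarray*}
\int_M p g \,\mathrm{d}V \; = \; \int_M (f - \xi) \,\mathrm{d}P \; = \; 0,
\end{eqnarray*}
so the compatibility condition holds and there is a unique $V$-mean-zero solution $h$. The same elliptic regularity result already invoked in Lem.~\ref{lem: Sobolev embed Stein} (Lem.~2.4--2.5 of \cite{Grosse2017}) then supplies
\begin{eqnarray*}
\|h\|_{W_2^{s+2}(M)} \; \leq \; C_2 \|p g\|_{W_2^s(M)} \; \leq \; C_3 \|g\|_{W_2^s(M)} \; \leq \; C_4 \|f\|_{W_2^s(M)},
\end{eqnarray*}
where the middle inequality uses that $p \in C^{s+1}(M)$ acts as a bounded multiplier on $W_2^s(M)$ and the outer inequality uses the bound on $\xi$ together with compactness of $M$.

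The proof then concludes by combining the two estimates through Assumption~\ref{asm: Stein class} (norm-equivalence $\|h\|_{\mathcal{H}(k)} \asymp \|h\|_{W_2^{s+2}(M)}$) and substituting into Eqn.~\ref{eq: RKHS sum}: $\|f\|_{\mathcal{H}(k_{P,\sigma})}^2 \leq \sigma^{-2}\xi^2 + \|h\|_{\mathcal{H}(k)}^2 \leq C^2 \|f\|_{W_2^s(M)}^2$. The main obstacle is the elliptic step, and specifically checking that the Neumann realisation of the weighted operator $L = \nabla\cdot(p\nabla)$ on a compact Riemannian manifold with boundary enjoys the required $W_2^s \to W_2^{s+2}$ solvability and that the resulting solution is compatible with the boundary condition encoded into $\mathcal{H}(k)$ via Assumption~\ref{asm: BC}; on a closed manifold this is classical Fredholm theory and the argument simplifies considerably.
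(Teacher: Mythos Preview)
Your proposal is correct and follows essentially the same route as the paper: set $\xi = I(f)$, solve the Neumann problem $\nabla\cdot(p\nabla h) = p(f-\xi)$, invoke the elliptic estimate from \cite{Grosse2017} to bound $\|h\|_{W_2^{s+2}(M)}$ by $\|f\|_{W_2^s(M)}$, and conclude via Eqn.~\ref{eq: RKHS sum} together with Assumption~\ref{asm: Stein class}. The only quibble is that the solvability-with-regularity step here is the content of Thm.~1.2 in \cite{Grosse2017} (not the forward mapping Lemma~2.4 used in Lem.~\ref{lem: Sobolev embed Stein}); otherwise your argument and the paper's coincide.
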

\begin{proof}
Let $f \in W_2^s(M)$ and consider solutions $\xi \in \mathbb{R}$, $h \in W_2^{s+2}(M)$ to the partial differential equation
\begin{eqnarray}
\tau h + \xi & = & f \qquad \text{in } M \label{eq: PDE} \\
g(\nabla h , \bm{n} ) & = & 0 \qquad \text{on } \partial M,  \nonumber
\end{eqnarray}
sometimes called the \emph{Stein equation} \citep{Ley2017}. 
If the manifold has no boundary we simply drop the boundary condition and Stein equation reduces to
$\tau h + \xi  = f$.
Note that for the equation to be well-defined
we must have $\xi = P(f)$
 due to \Cref{asm: BC}. 
We re-write the first equation in \eqref{eq: PDE} as  $D_a(h) = p(f-\xi)$ with the notation from the proof of Lemma \ref{lem: Sobolev embed Stein}.
 In order to obtain existence of solution we rely on the Fredholm alternative, according to which a solution exists if and only if $p(f-\xi)$ is orthogonal in $L^2(M,\mathrm{d}V)$ to the kernel of the adjoint in $L^2(M,\mathrm{d}V)$ of $D_a$, see for example Corollary 10.4.10
 \cite{nicolaescu2007lectures}.
 Observe that the adjoint of $D_a$ is $D_a$ itself, since by the divergence theorem, for any $h,u \in C^{\infty}(M)$
 \begin{align*}
 \int D_a(h) u \mathrm{d}V 
   &= \int u \nabla \cdot (p\nabla h) \mathrm{d}V = - \int p \nabla h(u) \mathrm{d}V=- \int p \nabla h\cdot \nabla u \mathrm{d}V\\
&=
- \int p  \nabla u(h) \mathrm{d}V = \int h \nabla \cdot (p \nabla h) \mathrm{d}V= \int h  D_a (u) \mathrm{d}V.
 \end{align*}
 Moreover the kernel of $D_a$ consists only of constant functions, since if $h$ belongs to $\text{ker} D_a$, then multiplying by $h$ and integrating yields
 $$D_a(h)=0 \quad \implies \quad  0=\int hD_a(h) \mathrm{d}V
 = \int h \nabla \cdot (p\nabla h) \mathrm{d}V=
 - \int  g(\nabla h,\nabla h) \mathrm{d}P,$$
 and the positive-definiteness of the Riemannian metric, $g(\nabla h,\nabla h) \geq 0$, implies that $g(\nabla h,\nabla h)$ is the zero function, which is equivalent to $\nabla h=0$, which is equivalent to $h=$ constant.
 Existence hence follows by noting that 
 $p(f-\xi)$ is orthogonal to the constant functions in $L^2(M,\mathrm{d}V)$, which is a consequence of the fact that its integral with respect to $P$ vanishes,
 $$ \int p(f-\xi)h \mathrm{d}V=h\int(f-\xi)\mathrm{d}P=0 \quad \forall \text{ contant functions } h.$$   

The existence of a solution implies that 
$W_2^s(M)$ is a subset of $\mathcal{H}(k_{P,\sigma})$, and it remains to prove that it is continuously embedded.
To do so we will apply Lemma 10.4.9 \cite{nicolaescu2007lectures}, which states that there exists $0 < C < \infty$ such that, for all $u \in W_2^{s+2}(M)$ that are orthogonal to $\text{ker} D_a$ in $L^2(M)$,   
$$ \|u\|_{W_2^{s+2}(M)}  \leq   C \| D_a(u)\|_{W_2^s(M)}.$$
Let $\overline{h}$ be a solution to the Stein equation, which exists from the above argument, and define
$h = \overline{h} - \int \overline{ h} \mathrm{d}V$.
 Then $h$ is a solution with vanishing integral and is thus orthogonal to the $\text{ker} D_a$.
Hence, for some finite constants $C_i$,
\begin{eqnarray*} 
\|h\|_{W_2^{s+2}(M)} & \leq  & C_1 \| p(f-\xi)\|_{W_2^s(M)}  \\ 
 & \leq  & C_2 \| f-\xi \|_{W_2^s(M)}  \\
 & \leq & C_2 \big (\| f \|_{W_2^s(M)}  +\|\xi \|_{W_2^s(M)} \big )
\end{eqnarray*}
where in a small abuse of notation $\xi$ denotes also the constant function with value $\xi$ and we have used the fact that the density $p$ is bounded above on $M$.
Using Jensen's inequality $ \|\xi \|_{W_2^s(M)} \leq C_3 | \xi | \leq C_3 \| f \|_{L^1(M)} \leq  C_4 \| f \|_{L^2(M)} \leq  C_5 \| f \|_{W^s_2(M)}$. 
Thus $ \|h\|_{W_2^{s+2}(M)} \leq C_6 \|f\|_{W_2^s(M)}$. 

To complete the proof, we have from \eqref{eq: RKHS sum} that 
\begin{eqnarray}
\|f\|_{\mathcal{H}(k_{P,\sigma})}^2 & \leq & \sigma^{-2} P(f)^2 + \|h\|_{\mathcal{H}(k)}^2 \label{eq: specific bound}
\end{eqnarray}
where $h$ is the unique element in $\mathcal{H}(k)$ that satisfies \eqref{eq: PDE}.
Now, from Jensen's inequality $P(f)^2 \leq P(f^2)$ and, from the definition of the Sobolev norm and the fact that $p$ is bounded above on $M$, we have $\|f\|_{L_2(p)} \leq C_7 \|f\|_{W_2^s(M)}$.
Moreover, by hypothesis $\mathcal{H}(k)$ is norm-equivalent to $W_2^{s+2}(M)$; i.e. $\|h\|_{\mathcal{H}(k)} \leq C_8 \|h\|_{W_2^{s+2}(M)}$ for some finite constant $C_8$.
Thus
\begin{eqnarray}
\|f\|_{\mathcal{H}(k_{P,\sigma})}^2 & \leq & (\sigma^{-2}C_7^2 + C_6^2 C_8^2) \|f\|_{W_2^s(M)}^2
\end{eqnarray}
as required, with $C = (\sigma^{-2}C_7^2 + C_6^2 C_8^2)^{\frac{1}{2}}$.
\end{proof}

Together, \Cref{lem: Sobolev embed Stein,lem: Stein embed Sobolev} establish that the Stein RKHS is equivalent to the Sobolev RKHS and thus \Cref{thm: equivalent kernels} is established.
This result, which is likely to be of independent interest, enables us to reduce the analysis of methods based on a Stein kernel to the analysis of methods based on a Sobolev kernel.

\subsection{Proof of \Cref{cor: KSD Sobolev}}

The argument from the main text establishes that $\text{KSD}(P_n,P) \rightarrow 0$ if and only if $\text{MMD}_k(P_n,P) \rightarrow 0$, where $\text{MMD}_k(Q,P) := \sup\{Q(f) - P(f) : \|f\|_{\mathcal{H}(k)} \leq 1\}$ is called the \textit{maximum mean discrepancy} associated with the kenel $k$ \citep{Smola2007}.
Recall that a kernel $k$ is said to be \textit{characteristic} if $P \mapsto \int_M k(\cdot,x) \mathrm{d}P(x)$ is an injective map into $\mathcal{H}(k)$.
Thm. 3.1 of \cite{SimonGabriel2020} ensures that, for a continuous, bounded and characteristic kernel $k$ on a compact Hausdorff space $M$, $\text{MMD}_k$ metrises weak convergence in $\mathcal{P}(M)$.
Thus we aim to establish the preconditions of this result.
In our setting, $M$ is a compact Hausdorff space and, from \Cref{asm: Stein class} and the Sobolev embedding theorem, $\mathcal{H}(k) \subset C(M,\mathbb{R})$.
The inclusion $\mathcal{H}(k) \subset C(M,\mathbb{R})$ implies $k$ is continuous and bounded, due to Cor. 3 in \cite{Simon2018}, so it remains only to argue that $k$ is a characteristic kernel.

Recall that a positive definite kernel $k : M \times M \rightarrow \mathbb{R}$ on a compact Hausdorff space $M$ is said to be \textit{$c$-universal} if $\mathcal{H}(k)$ is dense in $C(M,\mathbb{R})$ with respect to the uniform norm on $M$ \citep{Sriperumbudur2011}.
Moreover, recall that a $c$-universal kernel on a compact Hausdorff space is a characteristic kernel \cite[Thm. 6 in][]{Simon2018}.
Thus we must argue that $\mathcal{H}(k)$ in \Cref{asm: Stein class} is dense in $C(M,\mathbb{R})$ with respect to the uniform norm on $M$.
i.e. that $W_2^{s+2}(M)$ is dense in $C(M,\mathbb{R})$.
This is a standard fact, which follows from the fact that $W_2^{s+2}(M)$ is a sub-algebra of $C(M,\mathbb{R})$ that separates points and vanishes nowhere, together with the Stone--Weirstrass theorem on a compact domain:

\begin{theorem}[Stone--Weirstrass on compact $M$]
Let $M$ be a compact Hausdorff space and $F$ be a sub-algebra of $C(M,\mathbb{R})$.
Then $F$ is dense in $C(M,\mathbb{R})$ (equipped with the uniform norm on $M$) if and only if $F$ separates points ($\forall x,y \in M$, $\exists f \in F$ s.t. $f(x) \neq f(y)$) and vanishes nowhere ($\forall x \in M$, $\exists f \in F$ s.t. $f(x) \neq 0$).
\end{theorem}
\begin{proof}
A special case of the more general result for locally compact spaces due to \cite{Stone1948}.
\end{proof}

\subsection{Proof of \Cref{thm: main result}} \label{subsec: main proof sec}

In light of \Cref{thm: equivalent kernels}, the proof of \Cref{thm: main result} is relatively standard.
Indeed, we have established that $\mathcal{H}(k_{P,\sigma})$ and $W_2^s(M)$ are equivalent, so the convergence of $\hat{f}$ to $f$ can be studied in the standard Sobolev space context.
The following \Cref{lem: Atlas} follows immediately from Prop. 7 and Thm. 8 in \cite{Fuselier2012}, together with the fact all Riemannian metrics (and their induced norms) are equivalent on a compact manifold, see page 22 of \cite{Hebey2000}. 

\begin{lemma} \label{lem: Atlas}
Let $M$ be a smooth, compact, $d$-dimensional Riemannian manifold.
Then there exists an atlas $(U_i,\phi_i)$ on $M$ and constants $C_1,C_2>0$ such that, if $\bm x, \bm y \in U_j$ for some $j$, then 
 \begin{eqnarray*}
  C_1 \| \phi_j(\bm x)-\phi_j(\bm y) \|_2  \; \leq \; d_M(\bm x,\bm y) \; \leq \; C_2 \| \phi_j(\bm x)-\phi_j(\bm y) \|_2
 \end{eqnarray*}
Moreover, let $h_{X\bigcap U_j} \big |_{U_j} := \sup_{\bm{x} \in U_j } \min_{\bm y \in X\bigcap U_j} d_M(\bm{x},\bm{y})$ denote the fill distance restricted to $U_j$.
Then, if we choose an atlas as above, there exists finite constants $h_0,C$ such that, if $X\subset M$ is a finite point set with $h_X < h_0$, then for all $U_j$
  \begin{eqnarray*}
 h_{X\bigcap U_j} \big |_{U_j} \; \leq \; C h_X .
 \end{eqnarray*}
\end{lemma}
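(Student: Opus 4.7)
The strategy is to construct a finite atlas of normal-coordinate charts using compactness, derive the two-sided distance inequalities from uniform bounds on the pulled-back metric, and handle the restricted fill distance with a Lebesgue-number argument on a hierarchically refined cover.

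For the atlas, I would exploit that the injectivity radius $\rho_0 := \inf_{\bm{x} \in M} \rho(\bm{x})$ is strictly positive on a compact manifold with a smooth metric. Fixing $r < \rho_0/4$ and choosing finitely many points $\bm{x}_j$ so that the geodesic balls $U_j := B_r(\bm{x}_j)$ cover $M$, I set $\Psi_j$ to be the inverse exponential map $\exp_{\bm{x}_j}^{-1}$ composed with a fixed linear identification $T_{\bm{x}_j} M \cong \mathbb{R}^d$. On each chart the pulled-back metric tensor is a smooth positive-definite matrix field on the compact closure of $\Psi_j(U_j)$, with eigenvalues uniformly bounded away from $0$ and $\infty$. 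Pulling a Euclidean segment between $\Psi_j(\bm{x})$ and $\Psi_j(\bm{y})$ back to $M$ produces an admissible curve bounding $d_M(\bm{x},\bm{y})$ above by a constant multiple of $\|\Psi_j(\bm{x}) - \Psi_j(\bm{y})\|_2$; conversely, the choice $r < \rho_0/4$ keeps the minimising $M$-geodesic inside the normal chart, and pulling it back yields the matching lower bound. Taking extrema over the finite atlas produces the uniform constants $C_1, C_2$.

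For the restricted fill distance, the core obstacle is that the globally nearest design point of $X$ to some $\bm{x} \in U_j$ may itself lie outside $U_j$ when $\bm{x}$ is close to $\partial U_j$; a naive argument therefore does not directly produce a nearby point of $X \cap U_j$. The plan is to introduce a refined sub-cover $\{V_j\}$ with $\overline{V_j} \subset U_j$ still covering $M$ (take $V_j := B_{r/2}(\bm{x}_j)$, say), apply the Lebesgue-number lemma to $\{V_j\}$ to obtain $\delta > 0$ such that every geodesic ball of radius $\delta$ is contained in some $V_i$, and set $h_0 := \delta$. For $\bm{x} \in U_j$ and $h_X < h_0$, the globally nearest $\bm{y}^\star \in X$ to $\bm{x}$ then lies in $B_{h_X}(\bm{x}) \subset V_i \subset U_i$ for some $i$; if $i = j$ the bound holds trivially, otherwise the overlap structure of the cover allows one to exchange $\bm{y}^\star$ for a design point in $X \cap U_j$ at the cost of only a bounded multiplicative factor in the distance.

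The hardest step is arranging the atlas so that this final ``chart-swap'' is quantitatively controlled with a constant $C$ uniform in $j$ and in $\bm{x}$: for every $\bm{x} \in U_j$ one must produce a design point of $X$ already contained in $U_j$ at distance at most $C h_X$. This is precisely the substance of Prop.~7 and Thm.~8 of \cite{Fuselier2012}, which construct compatible covers with the required overlap depth on any smooth compact Riemannian manifold. Combined with the fact that all smooth Riemannian metrics on a compact manifold induce equivalent distance functions (p.\ 22 of \cite{Hebey2000}), these ingredients deliver the lemma as stated.
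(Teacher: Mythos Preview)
Your proposal is correct and lands on exactly the same ingredients the paper uses: the paper does not give an independent argument but simply states that the lemma ``follows immediately from Prop.~7 and Thm.~8 in \cite{Fuselier2012}, together with the fact all Riemannian metrics (and their induced norms) are equivalent on a compact manifold, see page 22 of \cite{Hebey2000}.'' Your normal-coordinate and Lebesgue-number sketch is useful exposition of what those cited results contain, but since you ultimately defer the quantitative chart-swap step to the very same Fuselier--Wright propositions, the two approaches coincide.
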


The output of the Stein kernel method satisfies $P_X(f) = P( \hat{f})$ and in particular this means that
\begin{eqnarray*}
|P(f) - P_X(f)| = |P(f - \hat{f})| = \left| \int_M \big( f - \hat{f} \big) \mathrm{d}P \right| \leq \|f - \hat{f}\|_{L_2(P)} .
\end{eqnarray*}
Using \Cref{lem: Atlas}, and following the argument used in  Thm. 10  of \cite{Fuselier2012}, we have that 
\begin{eqnarray*}
\| f - \hat{f} \|_{ L_2(P) } & \leq & C h_X^s \| f - \hat{f}\|_{W_2^s(M)}
\end{eqnarray*}
Hence, for some finite constants $C_i$,
\begin{eqnarray*}
\| f - \hat{f} \|_{ W_2^s(M) } & \leq & C_1 \| f - \hat{f} \|_{ \mathcal{H}(k_{P,\sigma}) } \quad \text{(\Cref{lem: Sobolev embed Stein})} \\
& \leq & C_1 \|f\|_{\mathcal{H}(k_{P,\sigma})} \quad \text{(best approximation property; \eqref{eq: best approximation property})} \\
& \leq & C_2 \|f\|_{W_2^s(M)} \quad \text{(\Cref{lem: Stein embed Sobolev})} .
\end{eqnarray*}
Thus $|P(f) - P_X(f)| \leq C_3 h_X^s \|f\|_{W_2^s(M)}$, as required.
This completes the proof of \Cref{thm: main result}.

\subsection{Proof of \Cref{cor: MCMC}} \label{subsec: cors for mcmc}

The proof proceeds in two stages, and first we state and prove a result for independent samples that will serve as a stepping stone toward the main result:

\begin{corollary} \label{cor: MC}
Suppose that $(\bm{x}_i)_{i \in \mathbb{N}}$ is a sequence of independent samples from the distribution $P$.
Let $X = \{\bm{x}_i\}_{i=1}^n$ and denote expectation with respect to the sampling distribution of $X$ as $\mathbb{E}$.
Then
\begin{eqnarray*}
\mathbb{E} |P_X(f) - P(f)| & \leq & C_s' n^{-\frac{s}{d}} \log(n)^{\frac{s}{d}} \|f\|_{W_2^s(M)} ,
\end{eqnarray*} 
where $C_s'$ is a constant depending on $s$ but independent of $f$.
\end{corollary}
\begin{proof}[Proof of \Cref{cor: MC}]
First, let $(\bm{Y}_i : \Omega \to M)_{i \in \mathbb{N}}$ be a sequence of i.i.d. random variables distributed according to the normalised Riemannian measure $\bar{V}$, i.e., $\mathbb P \circ \bm{Y}_i^{-1} =\bar{V}$, where $\mathbb P$ is the probability measure on $\Omega$.
Let $Y^n = \{\bm{Y}_i\}_{i=1}^n$.
From \cite[][Thm. 3.2, Cor. 3.3]{Reznikov2015} we have that
\begin{eqnarray}
\mathbb{E} [h_{Y^n}^s] & \leq & C_s'' n^{-\frac{s}{d}} \log(n)^{\frac{s}{d}}  \label{eqn: Reznikov}
\end{eqnarray}
for some $C_s''$ a finite $s$-dependant constant.
Since $M$ is compact and $p$ is continuous (\Cref{ass: density}), the Radon-Nikodym derivative $\mathrm{d}P / \mathrm{d}\bar{V}$ can be bounded away from zero almost everywhere on $M$, say by $w > 0$, and without loss of generality we can assume that $w < 1$.
It follows that $P$  can be represented as a bivariate mixture, one of whose components is $\bar{V}$.
Specifically, $P=w\bar V+(1-w)Q$, where $Q$ has density 
\begin{eqnarray*}
 \frac{\mathrm{d}Q}{\mathrm{d}\bar{V}} \; = \; \frac{1}{1-w}\left( \frac{\mathrm{d}P}{\mathrm{d}\bar{V}} - w \right) .
\end{eqnarray*}

Let us introduce a Bernoulli random variable $ B \sim \text{Bernoulli}(w)$, and a random variable $\bm{Z}$ with conditional distributions
\begin{eqnarray*}
\bm{Z} | B & \sim & \left\{ \begin{array}{ll} \bar{V} & \text{if } B= 1 \\ Q & \text{if } B = 0 \end{array} \right. 
\end{eqnarray*}
It follows that the law of $\bm{Z}$ is $P$, since 
\begin{eqnarray*} 
\mathbb P \circ \bm{Z}^{-1}(A) & = &  \mathbb P( \bm{Z} \in A |B =1) \mathbb P(B=1)+\mathbb P(\bm{Z} \in A | B=0) \mathbb P(B=0) \\
& = &  \bar{V}(A) w + Q(A) (1-w) \\
& = & P(A), 
\end{eqnarray*}
 where $w = \mathbb P (B=1) $. Consider then such a sequence $(\bm{Z}_i)_{i \in \mathbb{N}}$ of i.i.d. random variables with the same law as $\bm{Z}$, and let $Z := \{\bm{Z}_i : 1 \leq i \leq n\}$. 
If $m$ of the variables in $Z$ were obtained by sampling from $\bar{V}$ (which arise whenever the Bernoulli random variable takes the value $B=1$), then without loss of generality 
 $Z | m = (\bm{Y}_1,\ldots, \bm{Y}_m, \bm{W}_{m+1},\ldots, \bm{W}_{n})$, where $\bm{W}_i \sim Q$.

From the law of conditional expectation, the fact that $X$ and $Z$ are identically distributed, the fact that $  h_{Z}^s | m \leq h_{Y^m}^s$ and \eqref{eqn: Reznikov},
\begin{eqnarray*}
\mathbb{E} [h_X^s] & = & \mathbb{E} \mathbb{E} [ h_Z^s | m] \\
& \leq & \mathbb{E} \mathbb{E} [ h_{Y^m}^s | m] \\
& = & \mathbb{E} \mathbb{E} [ \min(\text{diam}(M) , h_{Y^m}^s) \; | \; m] \\
& \leq & \mathbb{E} [ \min(\text{diam}(M) , C_s'' m^{-\frac{s}{d}} \log(m)^{\frac{s}{d}} ) ] .
\end{eqnarray*}
Fix $0 < \epsilon < w$.
From the law of total expectation, for $n > 1$:
\begin{eqnarray*}
& & \hspace{-60pt} \frac{ \mathbb{E} [ \min(\text{diam}(M) , C_s'' m^{-\frac{s}{d}} \log(m)^{\frac{s}{d}} ) ] }{n^{-\frac{s}{d}} \log(n)^{\frac{s}{d}} } \\
& = & \underbrace{ \text{diam}(M) \frac{ \mathbb{P}(m=0) }{n^{-\frac{s}{d}} \log(n)^{\frac{s}{d}} } }_{(*)} \\
& & + \underbrace{ \frac{ \mathbb{E} [ \min(\text{diam}(M) , C_s'' m^{-\frac{s}{d}} \log(m)^{\frac{s}{d}} ) | 0 < m \leq \epsilon n ] }{n^{-\frac{s}{d}} \log(n)^{\frac{s}{d}} } \mathbb{P}(0 < m \leq \epsilon n) }_{(**)} \\
& & + \underbrace{ \sum_{\tilde{m} > \epsilon n} \frac{\mathbb{E} [ \min(\text{diam}(M) , C_s'' \tilde{m}^{-\frac{s}{d}} \log(\tilde{m})^{\frac{s}{d}} ) | m = \tilde{m} ] }{n^{-\frac{s}{d}} \log(n)^{\frac{s}{d}} } \mathbb{P}(m = \tilde{m}) }_{(***)} 
\end{eqnarray*}
The first term is seen to vanish as $n \rightarrow \infty$:
\begin{eqnarray*}
(*) & = & \text{diam}(M) n^{\frac{s}{d}} \log(n)^{-\frac{s}{d}} (1-w)^n \rightarrow 0 
\end{eqnarray*}
Since $m \sim \text{Binomial}(n,w)$, we have that $\log(m) / \log(n) \leq 1$ and moreover from Hoeffding's inequality we have that
\begin{eqnarray*}
\mathbb{P}(m \leq \epsilon n) & \leq & \exp(-2(w-\epsilon)^2 n) .
\end{eqnarray*}
Thus, letting $C_s''' = \max(\text{diam}(M) , C_s'')$, the second term is also seen to vanish as $n \rightarrow \infty$:
\begin{eqnarray*}
(**) & \leq & \mathbb{E} \left[ \left. \min \left(\text{diam}(M) , C_s'' \left(\frac{m}{n}\right)^{-\frac{s}{d}} \right) \right| 0 < m \leq \epsilon n \right] \mathbb{P}(0 < m \leq \epsilon n) \\
& \leq & C_s''' n^{\frac{s}{d}} \mathbb{P}(0 < m \leq \epsilon n) \\
& \leq & C_s''' n^{\frac{s}{d}} \exp(-2(w-\epsilon)^2 n) \rightarrow 0 
\end{eqnarray*}
For the final term, let $g : [0,1] \rightarrow \mathbb{R}$ be defined as
\begin{eqnarray*}
g(x) & = & \left\{ \begin{array}{ll} \epsilon^{-\frac{s}{d}} & x \leq \epsilon \\ x^{-\frac{s}{d}} & x > \epsilon \end{array} \right. 
\end{eqnarray*}
which is observed to be a continuous and bounded.
From the strong law of large numbers, $m/n$ converges a.e. to $w$, and thus in distribution to $\delta(w)$. From the Portmanteau theorem we have that $\mathbb{E}[g(m / n)] \rightarrow g(w) = w^{-s/d}$.
Thus the third term is bounded as $n \rightarrow \infty$:
\begin{eqnarray*}
(***) & \leq & \sum_{\tilde{m} > \epsilon n} \min \left(\text{diam}(M) , C_s'' \left(\frac{\tilde{m}}{n}\right)^{-\frac{s}{d}} \right) \mathbb{P}(m = \tilde{m}) \\
& \leq & C_s''' \sum_{\tilde{m} > \epsilon n} \left(\frac{\tilde{m}}{n}\right)^{-\frac{s}{d}} \mathbb{P}(m = \tilde{m}) \\
& \leq & C_s''' \mathbb{E}\left[ g\left( \frac{m}{n} \right) \right] \; \rightarrow \; C_s''' w^{-\frac{s}{d}} \; < \; \infty
\end{eqnarray*}
Together with \Cref{thm: main result}, the result is now established.
\end{proof}

Now we turn to the proof of \Cref{cor: MCMC}:

\begin{proof}[Proof of \Cref{cor: MCMC}]
The proof focuses on $(\bm{x}_{in})_{i \in \mathbb{N}}$, which is sometimes referred to as the $n$-step \emph{jump chain} and $P^n$ is its transition kernel.
Since $M$ is compact and $P$ is continuous (\Cref{ass: density}), the $n$th step transition distributions initialised from $\bm{x}_{in} \in M$, denoted $P_{n,i}(\cdot) := P^n(\bm{x}_{in} , \cdot)$, are absolutely continuous with respect to $P$ and therefore admit Radon-Nikodym derivatives $\mathrm{d}P_{n,i} / \mathrm{d}P$. 
Since the Markov chain is ergodic, $| \mathrm{d}P_{n,i} / \mathrm{d}P - 1 | \leq C \rho^n$ and we can select a value $n_0 \in \mathbb{N}$ independent of the $i$ index such that, for some $w < 1$ and all $n \geq n_0$, it holds almost everywhere that $\mathrm{d}P_{n,i} / \mathrm{d}P \geq w > 0$.
It follows that $P_{n_0,i}$ can be represented as a bivariate mixture, one of whose components is $P$.
Specifically, $\bm{x}_{i n_0}$ is equal in distribution to $\bm{z}_i$, where
\begin{eqnarray*}
\bm{z}_i | \bm{z}_{i-1} , B_i & \sim & \left\{ \begin{array}{ll} P & \text{if } B_i = 1 \\ Q_i & \text{if } B_i = 0 \end{array} \right. , \qquad B_i \overset{\text{i.i.d.}}{\sim} \text{Bernoulli}(w), \qquad \frac{\mathrm{d}Q_i}{\mathrm{d}P} = \frac{1}{1-w}\left( \frac{\mathrm{d}P_{n_0,i-1}}{\mathrm{d}P} - w \right) .
\end{eqnarray*}
Consider then such a sequence $(\bm{z}_i)_{i \in \mathbb{N}}$ and let $Z = \{ \bm{z}_i : 1 \leq i \leq \lfloor n / n_0 \rfloor \}$ and $Y = \{ \bm{z}_i : B_i = 1, \; 1 \leq i \leq \lfloor n / n_0 \rfloor \}$.
The elements of $Y$ are independent samples from $P$ and $m := |Y| \sim \text{Binomial}(\lfloor n / n_0 \rfloor)$.
The remainder of the proof is identical to that used for \Cref{cor: MC}.
\end{proof}

\end{bibunit}

\newpage

\begin{bibunit}[plainnat]

\section*{Supplement}

In this electronic supplement we report experiments designed to validate our theoretical results for the Riemann--Stein kernel method.
For this purpose we considered arguably the most important compact manifold; the sphere $\mathbb{S}^2$.

\subsubsection*{Differential Operators}

The coordinate patch $\phi$ from \Cref{example: S2} in \Cref{sec: background} can be used to compute the metric tensor
$$
\mathrm{G} \; =\;  \left( \begin{array}{cc} \sin^2 q_2 & 0 \\ 0 & 1 \end{array} \right)
$$
and a natural volume element $\mathrm{d}V = \sin q_2 \; \mathrm{d}q_1 \mathrm{d} q_2$.
It follows that, for a function $h : \mathbb{S}^2 \rightarrow \mathbb{R}$, we have the gradient differential operator
\begin{eqnarray*}
\nabla h & = & \frac{1}{\sin^2 q_2} \frac{\partial h}{\partial q_1} \partial_{q_1} + \frac{\partial h}{\partial q_2} \partial_{q_2} .
\end{eqnarray*}
Similarly, for a vector field $\bm{s} = s_1 \partial_{q_1} + s_2 \partial_{q_2}$, we have the divergence operator
\begin{eqnarray*}
\nabla \cdot \bm{s} & = & \frac{\partial s_1}{\partial q_1} + \frac{\partial s_2}{\partial q_2} + \frac{\cos q_2}{\sin q_2} s_2 .
\end{eqnarray*}
Thus the Riemann--Stein operator $\tau$ is:
\begin{eqnarray}
\tau h & = & \frac{\cos q_2}{\sin q_2} \frac{\partial h}{\partial q_2} + \frac{1}{\sin^2 q_2} \left\{ \frac{1}{p} \frac{\partial p}{\partial q_1} \frac{\partial h}{\partial q_1} + \frac{\partial^2 h}{\partial q_1^2} \right\} + \left\{ \frac{1}{p} \frac{\partial p}{\partial q_2} \frac{\partial h}{\partial q_2} + \frac{\partial^2 h}{\partial q_2^2} \right\} . \label{eq: Lpi for S2}
\end{eqnarray}
Turning this into expressions in terms of $\bm{x}$ requires that we notice
$$
\frac{\cos q_2}{\sin q_2} \; = \; \frac{x_3}{\sqrt{1 - x_3^2}}, \qquad \frac{1}{\sin^2 q_2} \; =\; \frac{1}{1 - x_3^2}
$$
and use chain rule for partial differentiation.

\subsubsection*{Choice of Kernel} 

The performance of the Riemann--Stein kernel method depends, of course, on the selection of a reproducing kernel $k$ to define the space $\mathcal{H}(k)$.
For standard manifolds, such as the sphere $M = \mathbb{S}^2$, several function spaces and their reproducing kernels have been studied \citep[e.g.][]{Porcu2016}.
For more general manifolds, an extrinsic kernel can be induced under embedding into an ambient space \citep{Lin2017}, or the stochastic partial differential approach \citep{Fasshauer2011,Lindgren2011,Niu2017} can be used to numerically approximate a suitable intrinsic kernel.
Note that none of the theoretical development in this paper relies on an embedding of $M$ into an ambient space; all of our analysis is intrinsic to the manifold.

Although our focus is on $\mathbb{S}^2$, for generality the remainder of this section discusses kernels on $\mathbb{S}^d$.
A kernel on $\mathbb{S}^d$ is characterised by a scalar $\sigma > 0$ and a sequence $(b_{n,d})_{n=0}^\infty$ of $d$-\emph{Schoenberg coefficients}, such that $0 \leq b_{n,d}$ and $\sum_{n=0}^\infty b_{n,d} = 1$, in the sense that
\begin{eqnarray}
k(\bm{x},\bm{y}) & = & \sigma^2 \sum_{n=0}^\infty b_{n,d} \frac{C_n^{(d-1)/2}( \bm{x} \cdot \bm{y} )}{C_n^{(d-1)/2}(1) }, \qquad \bm{x}, \bm{y} \in \mathbb{S}^d
\end{eqnarray}
where $C_n^\lambda$ are the Geigenbauer polynomials of degree $n$ and order $\lambda > 0$ \citep{Bingham1973,Marinucci2011,Dai2013,Gneiting2013,Daley2013}.
It is known that $b_{n,d} \asymp n^{-2\alpha}$ if and only if $\mathcal{H}(k)$ is norm-equivalent to the Sobolev space $W_2^\alpha(\mathbb{S}^d)$ \citep{Daley2013}. 
Thus in principle one has much scope to design an appropriate kernel whilst also ensuring that \Cref{asm: Stein class} is satisfied.
Therefore we need to elicit some additional \textit{desiderata} (D) to constrain ourselves to those kernels which are most useful:
\begin{desiderata}
The kernel should have an explicit form that can be easily differentiated.
\end{desiderata}
\begin{desiderata}
The kernel should have easily customisable smoothness.
\end{desiderata}
For the Riemann--Stein kernel method to be practical, D1 must hold.
For the Stein kernel method to be flexibly used, D2 must hold.
Note that one cannot, for example, just restrict the Mat\'{e}rn kernel on $\mathbb{R}^3$ to $\mathbb{S}^2$; in order that the restriction is positive definite we require a strong condition $\nu \in (0,\frac{1}{2}]$ on the smoothness parameter \citep{Gneiting2013}.
This fact makes this approach not suitable for our work, where higher order derivatives of the kernel are needed.
Moreover, the restriction of a Sobolev space to a manifold incurs a loss-of-smoothenss that depends on the manifold.
In what follows, two different kernels that each reproduce Sobolev spaces are presented:

\begin{kernel}[Brauchart; \cite{Brauchart2013}] \label{ex: Brauchart kernel}
For $\alpha + \frac{1}{2} - \frac{d}{2} \in \mathbb{N}$ and $\alpha > \frac{d}{2}$, the kernel
\begin{eqnarray}
k_1(\bm{x},\bm{y})  & = & C^{(1)} {}_3 F_2\left[ \begin{array}{cc} \frac{d}{2} + \frac{1}{2} - \alpha, \frac{d}{2} - \alpha, \frac{d}{2} + \frac{1}{2} - \alpha \\ \frac{d}{2} + 1 - \alpha , 1 + \frac{d}{2} - 2\alpha \end{array} ; \frac{1 - \bm{x}\cdot\bm{y}}{2} \right] + C^{(2)}  \|\bm{x} - \bm{y}\|_2^{2\alpha-2}, \label{eq: S2 kernel}
\end{eqnarray}
defined for $\bm{x},\bm{y} \in \mathbb{S}^d$, reproduces the Sobolev space $W_2^\alpha(\mathbb{S}^d)$.
In this paper ${}_pF_q$ is  the generalised hypergeometric function.
The constant terms in the kernel in \eqref{eq: S2 kernel} are as follows:
\begin{eqnarray*}
C^{(1)} & = & \frac{2^{2\alpha - 2}}{2\alpha - d} \frac{(\frac{d}{2})_{2\alpha - 2}}{(d)_{2\alpha - 2}} \\
C^{(2)} & = & (-1)^{\alpha - \frac{d}{2} + \frac{1}{2}} 2^{d - 2\alpha - 1} \frac{\Gamma(\frac{d+1}{2}) \Gamma(\alpha - \frac{d}{2} + \frac{1}{2}) \Gamma(\alpha - \frac{d}{2} + \frac{1}{2})}{\sqrt{\pi} \Gamma(\frac{d}{2}) (\frac{1}{2})_{\alpha - \frac{d}{2} + \frac{1}{2}} (\frac{d}{2})_{\alpha - \frac{d}{2} + \frac{1}{2}} } 
\end{eqnarray*}
where $(z)_n := \Gamma(z+n) / \Gamma(z)$ is the Pochhammer symbol.
From properties of hypergeometric functions, this kernel has an explicit closed form when $\alpha + \frac{1}{2} - \frac{d}{2} \in \mathbb{N}$, so that D1 is satisfied.
Moreover, D2 holds for this kernel.
\end{kernel}

\begin{kernel}[Wendland; \cite{Wendland1995}] \label{ex: Wendland}
Consider the compact support positive definite functions due to \cite{Wendland1995}
\begin{eqnarray*}
\phi_{i,j}(r) & = & \left\{ \begin{array}{ll} p_{i,j}(r) & \text{if } r \leq 1 \\
0 & \text{if } r > 1 \end{array} \right. ,
\end{eqnarray*}
defined for $r \geq 0$ where $p_{i,j} : [0,\infty) \rightarrow \mathbb{R}$ is a particular polynomial selected such that $\phi_{i,j}$ is positive definite and $\phi_{i,j} \in C^{2j}$.
The radial basis function 
\begin{eqnarray*}
k_2(\bm{x},\bm{y}) & = & \phi_{i,j}\left(\frac{\|\bm{x} - \bm{y}\|_2}{\lambda}\right),
\end{eqnarray*}
for $\lambda > 0$ reproduces $W_2^{j + \frac{i}{2} + \frac{1}{2}}(\mathbb{R}^i)$ on $\bm{x},\bm{y} \in \mathbb{R}^i$ \citep[see e.g. Thm. 2.1 in][]{Wendland1998}.
In the particular case where $i = d+1$, the restriction of $k_2$ to $\bm{x},\bm{y} \in \mathbb{S}^d$ reproduces $W_2^\alpha(\mathbb{S}^d)$ with $\alpha = j + \frac{i}{2}$; see Thm. 4.1 of \cite{Narcowich2007}.
See also \cite{Gneiting2002a,Narcowich2002,Zastavnyi2006,Bevilacqua2017}.
Desiderata D2 is therefore satisfied.
Moreover, this kernel has a closed form so that D1 is satisfied.
\end{kernel}

\subsubsection*{Assessment}

To numerically assess the convergence of the Riemann--Stein kernel method, consider the von Mises-Fisher distribution $P$ whose density with respect to $V$ is
$$
p(\bm{x}) = \frac{\|\bm{c}\|_2}{4 \pi \; \text{sinh}(\|\bm{c}\|_2)} \exp(\bm{c}^\top \bm{x}).
$$
For illustration, we suppose that the normalisation constant is unknown and we are told only that $p(\bm{x}) \propto \exp( \bm{c}^\top \bm{x} )$.
This is sufficient to construct the differential operator $\tau$ as previously described.
Our aim in what follows is to validate our theoretical analysis; for this reason in all experiments we fixed
$\lambda = 2$ for $k_2$ as a convenient default.
Further theoretical work will be needed to understand the properties of the Riemann--Stein kernel method when kernel parameters are adaptively estimated \citep{Stein2012}.

In what follows we first considered point sets $X = \{\bm{x}_i\}_{i=1}^n$ whose elements were quasi-uniformly distributed on $\mathbb{S}^2$, being obtained by minimising a generalised electrostatic potential energy \citep[Reisz's energy;][]{Semechko2015}.
Note that these points, being uniform, do not arise as an approximation to $P$; rather, they are intended to asymptotically minimise $h_X$ as motivated by \Cref{thm: main result}.
First, explicit function approximations are presented based on $k_1$, $k_2$ respectively in \Cref{fig: function approx k1,fig: function approx k3}.
These function approximations are $\hat{f} = \hat{\xi} + \tau \hat{h}$ where $(\hat{\xi},\hat{h})$ solve \eqref{eq: Stein method}.
Here the integrand $f(x)$ was based on a Rosenbrock function and represents a modest challenge to an interpolation-based integration method.
It was observed that both kernels provided an accurate approximation when a large number of points were used.
Next, for various values of $n$, we computed the worst case integration error in \eqref{eq: wce} (i.e. the generalised KSD).
Results in \Cref{fig: converge 1,fig: converge 3} show that, for both kernels $k_1$ and $k_2$, convergence of the generalised KSD occurred at the rate that was theoretically predicted.
Finally, we re-evaluated the worst case integration error (i.e. the generalised KSD), based instead on a point set $X$ generated as the realisation of an MCMC sample path.
Here we employed a Metropolis-Hastings Markov chain with the normalised Riemannian measure $\bar{V}$ as the proposal.
Results are presented in \Cref{fig: converge mcmc 1,fig: converge mcmc 3}, where again the theoretically obtained convergence rate was validated.

\begin{figure}
\centering
\begin{subfigure}[b]{0.49\textwidth}
\includegraphics[width = \textwidth,clip,trim = 0cm 1cm 3.9cm 0cm]{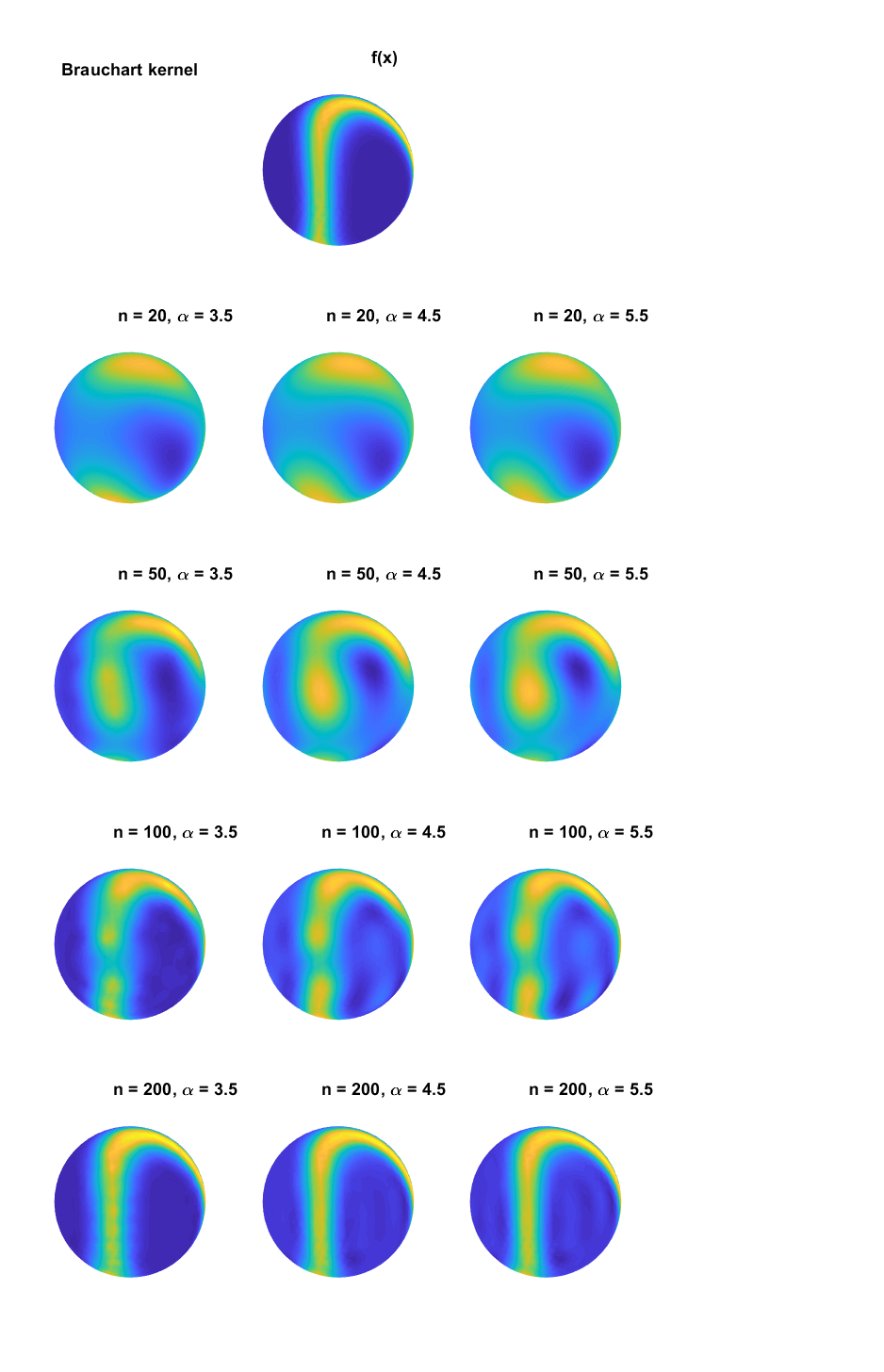}
\caption{$k_1$}
\label{fig: function approx k1}
\end{subfigure}
\begin{subfigure}[b]{0.49\textwidth}
\includegraphics[width = \textwidth,clip,trim = 0cm 1cm 3.9cm 0cm]{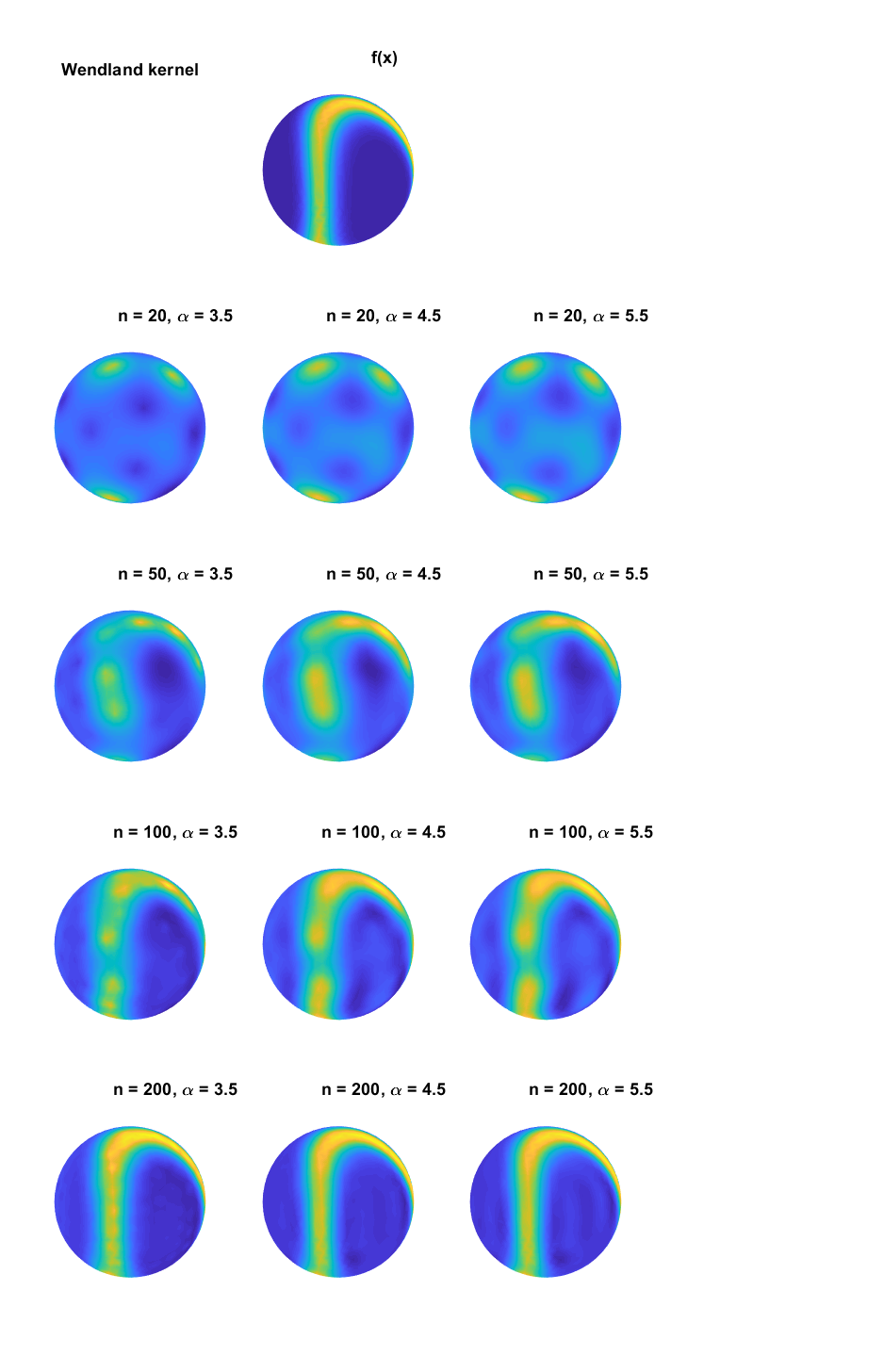}
\caption{$k_2$}
\label{fig: function approx k3}
\end{subfigure}
\caption{Function approximation with the Riemann--Stein kernel.
Here $f$ represents the exact integrand.
Each panel represents a kernel (a) $k_1$, (b) $k_2$, varying both the smoothness $\alpha$ of these kernel and the number $n$ of evaluations of the integrand.
The point set was quasi-uniform over $\mathbb{S}^2$.
}
\end{figure}

\begin{figure}
\centering
\begin{subfigure}[b]{0.4\textwidth}
\includegraphics[width = \textwidth,clip,trim = 5cm 9.5cm 5cm 9.5cm]{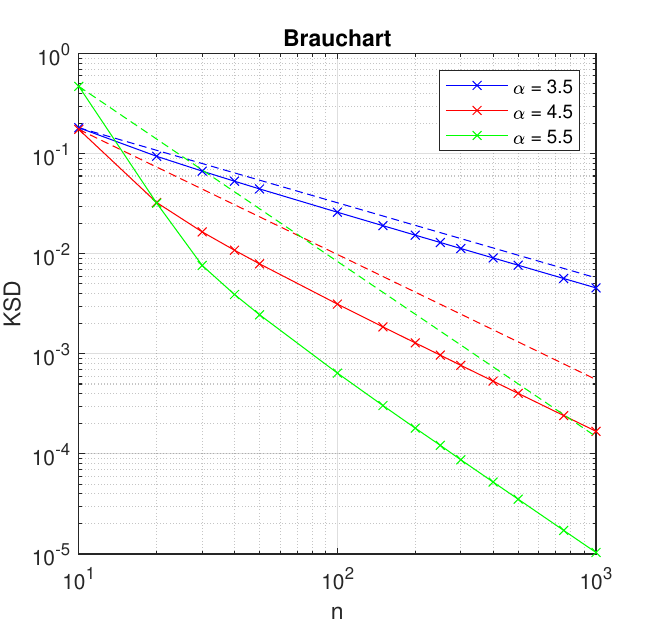} 
\caption{$k_1$}
\label{fig: converge 1}
\end{subfigure}
\begin{subfigure}[b]{0.4\textwidth}
\includegraphics[width = \textwidth,clip,trim = 5cm 9.5cm 5cm 9.5cm]{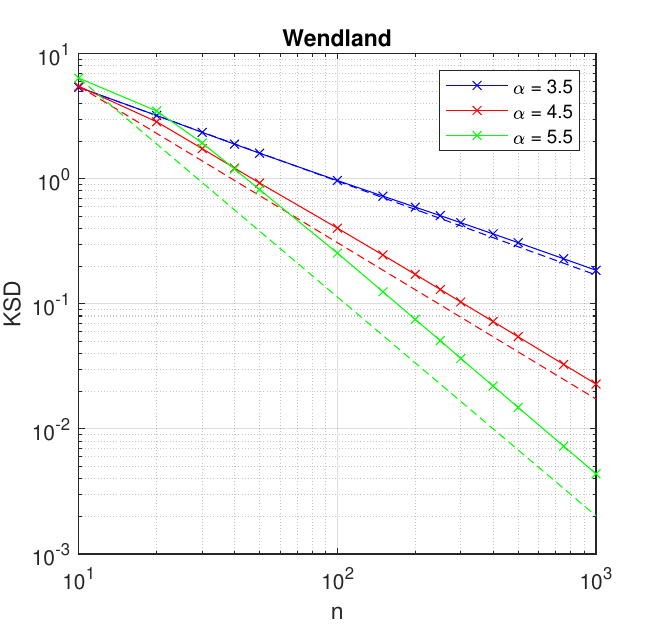}
\caption{$k_2$}
\label{fig: converge 3}
\end{subfigure}

\caption{
The worst case integration error (i.e. generalised kernel Stein discrepancy; KSD) of the Riemann--Stein kernel method was plotted for two different kernels, (a) $k_1$, (b) $k_2$, varying both the smoothness $\alpha$ of these kernels and the number $n$ of evaluations of the integrand.
The point set was quasi-uniform over $\mathbb{S}^2$.
Dashed lines represent the slope of the convergence rates that we have theoretically established.
}
\label{fig: converge}
\end{figure}

\begin{figure}
\centering
\begin{subfigure}[b]{0.4\textwidth}
\includegraphics[width = \textwidth,clip,trim = 5cm 9.5cm 5cm 9.5cm]{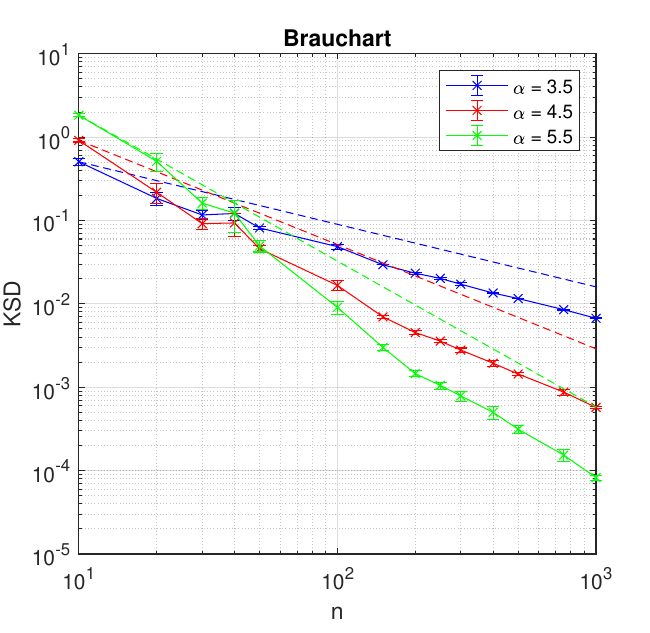} 
\caption{$k_1$}
\label{fig: converge mcmc 1}
\end{subfigure}
\begin{subfigure}[b]{0.4\textwidth}
\includegraphics[width = \textwidth,clip,trim = 5cm 9.5cm 5cm 9.5cm]{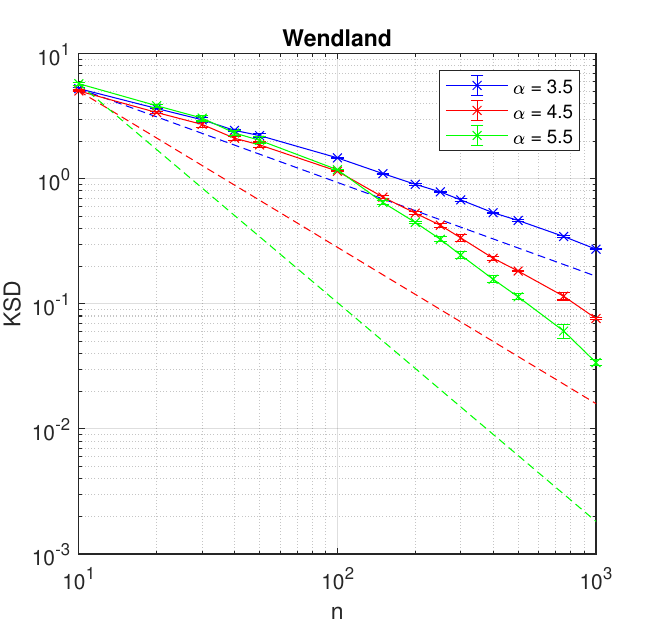}
\caption{$k_2$}
\label{fig: converge mcmc 3}
\end{subfigure}

\caption{
The worst case integration error (i.e. generalised kernel Stein discrepancy; KSD) of the Riemann--Stein kernel method was plotted for two different kernels, (a) $k_1$, (b) $k_2$, varying both the smoothness $\alpha$ of these kernels and the number $n$ of evaluations of the integrand.
The point set was obtained as the realisation of a Markov chain whose invariant distribution was $P$.
Here the arithmetic mean estimator is presented along with standard error bars, averaged over multiple realisations of the Markov chain.
Dashed lines represent the slope of the convergence rates that we have theoretically established.
}
\label{fig: converge mcmc}
\end{figure}

\end{bibunit}


\begin{thebibliography}{87}
\providecommand{\natexlab}[1]{#1}
\providecommand{\url}[1]{\texttt{#1}}
\expandafter\ifx\csname urlstyle\endcsname\relax
  \providecommand{\doi}[1]{doi: #1}\else
  \providecommand{\doi}{doi: \begingroup \urlstyle{rm}\Url}\fi

\bibitem[Arbel et~al.(2019)Arbel, Korba, Salim, and Gretton]{arbel2019maximum}
Michael Arbel, Anna Korba, Adil Salim, and Arthur Gretton.
\newblock Maximum mean discrepancy gradient flow.
\newblock In \emph{Advances in Neural Information Processing Systems}, pages
  6484--6494, 2019.

\bibitem[Arnaudon et~al.(2019)Arnaudon, Barp, and
  Takao]{arnaudon2019irreversible}
Alexis Arnaudon, Alessandro Barp, and So~Takao.
\newblock Irreversible {L}angevin {MCMC} on {L}ie groups.
\newblock In \emph{International Conference on Geometric Science of
  Information}, pages 171--179. Springer, 2019.

\bibitem[Assaraf and Caffarel(1999)]{Assaraf1999}
R.~Assaraf and M.~Caffarel.
\newblock Zero-variance principle for {M}onte {C}arlo algorithms.
\newblock \emph{Physical Review Letters}, 83\penalty0 (23):\penalty0 4682,
  1999.

\bibitem[Bachman(2006)]{Bachman2006}
D.~Bachman.
\newblock \emph{A Geometric Approach to Differential Forms}.
\newblock Birkh\"{a}user, 2006.

\bibitem[Baez and Muniain(1994)]{baez1994gauge}
J.~Baez and J.P. Muniain.
\newblock \emph{Gauge Fields, Knots and Gravity}, volume~4.
\newblock World Scientific Publishing Company, 1994.

\bibitem[Barp et~al.(2019)Barp, Kennedy, and Girolami]{barp2019hamiltonian}
Alessandro Barp, Anthony Kennedy, and Mark Girolami.
\newblock {H}amiltonian {M}onte {C}arlo on symmetric and homogeneous spaces via
  symplectic reduction.
\newblock \emph{arXiv:1903.02699}, 2019.

\bibitem[Belomestny et~al.(2017)Belomestny, Iosipoi, and
  Zhivotovskiy]{Belomestny2017}
D~Belomestny, L~Iosipoi, and N~Zhivotovskiy.
\newblock Variance reduction via empirical variance minimization: convergence
  and complexity.
\newblock \emph{arXiv:1712.04667}, 2017.

\bibitem[Belomestny et~al.(2020)Belomestny, Iosipoi, Moulines, Naumov, and
  Samsonov]{belomestny2020variance}
Denis Belomestny, Leonid Iosipoi, Eric Moulines, Alexey Naumov, and S~Samsonov.
\newblock Variance reduction for markov chains with application to mcmc.
\newblock \emph{Statistics and Computing}, pages 1--25, 2020.

\bibitem[Berlinet and Thomas-Agnan(2011)]{Berlinet2011}
A.~Berlinet and C.~Thomas-Agnan.
\newblock \emph{Reproducing Kernel Hilbert Spaces in Probability and
  Statistics}.
\newblock Springer Science \& Business Media, 2011.

\bibitem[Brandolini et~al.(2014)Brandolini, Choirat, Colzani, Gigante, Seri,
  and Travaglini]{Brandolini2014}
L.~Brandolini, C.~Choirat, L.~Colzani, G.~Gigante, R.~Seri, and G.~Travaglini.
\newblock Quadrature rules and distribution of points on manifolds.
\newblock \emph{Annali della Scuola Normale Superiore di Pisa - Classe di
  Scienze}, 5:\penalty0 889--923, 2014.

\bibitem[Brosse et~al.(2018)Brosse, Durmus, Meyn, Moulines, and
  Radhakrishnan]{brosse2018diffusion}
Nicolas Brosse, Alain Durmus, Sean Meyn, {\'E}ric Moulines, and Anand
  Radhakrishnan.
\newblock Diffusion approximations and control variates for mcmc.
\newblock \emph{arXiv:1808.01665}, 2018.

\bibitem[Byrne and Girolami(2013)]{Byrne2013}
S.~Byrne and M.~Girolami.
\newblock {Geodesic Monte Carlo on embedded manifolds}.
\newblock \emph{Scandinavian Journal of Statistics}, 40\penalty0 (4):\penalty0
  825--845, 2013.

\bibitem[Chen et~al.(2018)Chen, Mackey, Gorham, Briol, and Oates]{Chen2018}
WY~Chen, L~Mackey, J~Gorham, FX~Briol, and CJ~Oates.
\newblock Stein points.
\newblock In \emph{Proceedings of the 35th International Conference on Machine
  Learning}, 2018.

\bibitem[Chen et~al.(2019)Chen, Barp, Briol, Gorham, Girolami, Mackey, and
  Oates]{Chen2019}
WY~Chen, A~Barp, FX~Briol, J~Gorham, M~Girolami, L~Mackey, and CJ~Oates.
\newblock {S}tein point {M}arkov chain {M}onte {C}arlo.
\newblock In \emph{Proceedings of the 36th International Conference on Machine
  Learning}, 2019.

\bibitem[Chwialkowski et~al.(2016)Chwialkowski, Strathmann, and
  Gretton]{Chwialkowski2016}
K.~Chwialkowski, H.~Strathmann, and A.~Gretton.
\newblock A kernel test of goodness of fit.
\newblock In \emph{Proceedings of the 33rd International Conference on Machine
  Learning}, 2016.

\bibitem[De~Vito et~al.(2019)De~Vito, M{\"u}cke, and Rosasco]{DeVito2019}
Ernesto De~Vito, Nicole M{\"u}cke, and Lorenzo Rosasco.
\newblock Reproducing kernel hilbert spaces on manifolds: Sobolev and diffusion
  spaces.
\newblock \emph{Analysis and Applications}, 2019.
\newblock To appear.

\bibitem[Diaconis et~al.(2013)Diaconis, Holmes, and Shahshahani]{Diaconis2013}
P.~Diaconis, S.~Holmes, and M.~Shahshahani.
\newblock Sampling from a manifold.
\newblock In \emph{Advances in Modern Statistical Theory and Applications: A
  Festschrift in honor of Morris L. Eaton}, pages 102--125. Institute of
  Mathematical Statistics, 2013.

\bibitem[Dick et~al.(2016)Dick, Gantner, Gia, and Schwab]{Dick2016}
J.~Dick, R.N. Gantner, Q.T.~Le Gia, and C.~Schwab.
\newblock Higher order quasi-{M}onte {C}arlo integration for {B}ayesian
  estimation.
\newblock \emph{arXiv:1602.07363}, 2016.

\bibitem[Dick et~al.(2019)Dick, Rudolf, and Zhu]{Dick2019}
Josef Dick, Daniel Rudolf, and Houying Zhu.
\newblock A weighted discrepancy bound of quasi-monte carlo importance
  sampling.
\newblock \emph{Statistics \& Probability Letters}, 149:\penalty0 100--106,
  2019.

\bibitem[Ehler et~al.(2019)Ehler, Gr\"{a}f, and Oates]{Ehler2017}
M.~Ehler, M.~Gr\"{a}f, and C.J. Oates.
\newblock Optimal {M}onte {C}arlo integration on closed manifolds.
\newblock \emph{Statistics and Computing}, 29\penalty0 (6):\penalty0
  1203--1214, 2019.

\bibitem[Federer(1969)]{Federer1969}
H~Federer.
\newblock \emph{Geometric Measure Theory}.
\newblock Springer, 1969.

\bibitem[Fisher et~al.(2020)Fisher, Nolan, Graham, Prangle, and
  Oates]{Fisher2020}
MA~Fisher, T~Nolan, MM~Graham, D~Prangle, and CJ~Oates.
\newblock Measure transport with kernel {S}tein discrepancy.
\newblock \emph{arXiv:2010.11779}, 2020.

\bibitem[Frankel(2011)]{frankel2011geometry}
T.~Frankel.
\newblock \emph{The Geometry of Physics: An Introduction}.
\newblock Cambridge University Press, 2011.

\bibitem[Fuselier and Wright(2012)]{Fuselier2012}
E.~Fuselier and G.B. Wright.
\newblock Scattered data interpolation on embedded submanifolds with restricted
  positive definite kernels: {S}obolev error estimates.
\newblock \emph{SIAM Journal on Numerical Analysis}, 50\penalty0 (3):\penalty0
  1753--1776, 2012.

\bibitem[Gallot et~al.(1990)Gallot, Hulin, and
  Lafontaine]{gallot1990riemannian}
S.~Gallot, D.~Hulin, and J.~Lafontaine.
\newblock \emph{Riemannian Geometry}.
\newblock Springer, 3rd edition, 1990.

\bibitem[Gelman and Meng(1998)]{Gelman1998}
A.~Gelman and X.-L. Meng.
\newblock Simulating normalizing constants: From importance sampling to bridge
  sampling to path sampling.
\newblock \emph{Statistical Science}, pages 163--185, 1998.

\bibitem[Gilks et~al.(1995)Gilks, Richardson, and Spiegelhalter]{Gilks1995}
W.R. Gilks, S.~Richardson, and D.~Spiegelhalter.
\newblock \emph{Markov Chain Monte Carlo in Practice}.
\newblock CRC Press, 1995.

\bibitem[Gorham and Mackey(2017)]{Gorham2017}
J.~Gorham and L.~Mackey.
\newblock Measuring sample quality with kernels.
\newblock In \emph{Proceedings of the 34th International Conference on Machine
  Learning}, 2017.

\bibitem[Grigor’yan(2006)]{Grigor'yan2006}
Alexander Grigor’yan.
\newblock Heat kernels on weighted manifolds and applications.
\newblock \emph{Cont. Math}, 398\penalty0 (2006):\penalty0 93--191, 2006.

\bibitem[Grosse and Nistor(2017)]{Grosse2017}
N.~Grosse and V.~Nistor.
\newblock Neumann and mixed problems on manifolds with boundary and bounded
  geometry.
\newblock \emph{arXiv:1703.07228}, 2017.

\bibitem[Gro{\ss}e and Schneider(2013)]{grosse2013sobolev}
Nadine Gro{\ss}e and Cornelia Schneider.
\newblock Sobolev spaces on riemannian manifolds with bounded geometry: general
  coordinates and traces.
\newblock \emph{Mathematische Nachrichten}, 286\penalty0 (16):\penalty0
  1586--1613, 2013.

\bibitem[Han and Liu(2018)]{Han2018}
J.~Han and Q.~Liu.
\newblock Stein variational gradient descent without gradient.
\newblock In \emph{Proceedings of the 35th International Conference on Machine
  Learning}, 2018.

\bibitem[Hebey(2000)]{Hebey2000}
E.~Hebey.
\newblock \emph{Nonlinear Analysis on Manifolds: Sobolev Spaces and
  Inequalities}.
\newblock American Mathematical Society, 2000.

\bibitem[Hodgkinson et~al.(2020)Hodgkinson, Salomone, and
  Roosta]{Hodgkinson2020}
Liam Hodgkinson, Robert Salomone, and Fred Roosta.
\newblock The reproducing {S}tein kernel approach for post-hoc corrected
  sampling.
\newblock \emph{arXiv:2001.09266}, 2020.

\bibitem[Holbrook et~al.(2018)Holbrook, Lan, Vandenberg-Rodes, and
  Shahbaba]{Holbrook2016}
A.~Holbrook, S.~Lan, A.~Vandenberg-Rodes, and B.~Shahbaba.
\newblock {Geodesic Lagrangian Monte Carlo over the space of positive definite
  matrices: with application to Bayesian spectral density estimation}.
\newblock \emph{Journal of Statistical Computation and Simulation}, 88\penalty0
  (5):\penalty0 982--1002, 2018.

\bibitem[Joseph et~al.(2017)Joseph, Wang, Gu, Lv, and Tuo]{Joseph2017}
V.R. Joseph, D.~Wang, L.~Gu, S.~Lv, and R.~Tuo.
\newblock Deterministic sampling of expensive posteriors using minimum energy
  designs.
\newblock \emph{Technometrics}, 61\penalty0 (3):\penalty0 297--308, 2017.

\bibitem[Kingma and Welling(2014)]{Kingma2014}
DP~Kingma and M~Welling.
\newblock Auto-encoding variational {B}ayes.
\newblock In \emph{Proceedings of the 2nd International Conference on Learning
  Representations}, 2014.

\bibitem[Lam and Zhang(2019)]{lam2019stability}
Henry Lam and Haofeng Zhang.
\newblock On the stability of kernelized control functionals on partial and
  biased stochastic inputs.
\newblock In \emph{2019 Winter Simulation Conference (WSC)}, pages 344--355.
  IEEE, 2019.

\bibitem[Lan et~al.(2014)Lan, Zhou, and Shahbaba]{Lan2014}
S.~Lan, B.~Zhou, and B.~Shahbaba.
\newblock {Spherical Hamiltonian Monte Carlo for constrained target
  distributions}.
\newblock In \emph{Proceedings of the 31st International Conference on Machine
  Learning}, pages 629--637, 2014.

\bibitem[Le et~al.(2020)Le, Lewis, Bharath, and Fallaize]{Le2020}
H~Le, A~Lewis, K~Bharath, and C~Fallaize.
\newblock A diffusion approach to {S}tein's method on {R}iemannian manifolds.
\newblock \emph{arXiv:2003.11497}, 2020.

\bibitem[Lee(2013)]{Lee2013}
J.~M. Lee.
\newblock \emph{Introduction to {S}mooth {M}anifolds}.
\newblock Springer-Verlag, 2013.

\bibitem[Ley et~al.(2017)Ley, Reinert, and Swan]{Ley2017}
C.~Ley, G.~Reinert, and Y.~Swan.
\newblock Stein's method for comparison of univariate distributions.
\newblock \emph{Probability Surveys}, 14:\penalty0 1--52, 2017.

\bibitem[Liu and Zhu(2018)]{Liu2017}
C.~Liu and J.~Zhu.
\newblock {R}iemannian {S}tein variational gradient descent for {B}ayesian
  inference.
\newblock In \emph{Proceedings of the Thirty-Second AAAI Conference on
  Artificial Intelligence}, 2018.

\bibitem[Liu et~al.(2018)Liu, Feng, Mao, Zhou, Peng, and Liu]{liu2017action}
Hao Liu, Yihao Feng, Yi~Mao, Dengyong Zhou, Jian Peng, and Qiang Liu.
\newblock Action-depedent control variates for policy optimization via
  {S}tein's identity.
\newblock In \emph{Proceedings of the 6th International Conference on Learning
  Representations}, 2018.

\bibitem[Liu and Lee(2017)]{Liu2017b}
Q.~Liu and J.~D. Lee.
\newblock Black-box importance sampling.
\newblock In \emph{Proceedings of the 20th International Conference on
  Artificial Intelligence and Statistic}, 2017.

\bibitem[Liu and Wang(2016)]{Liu2016}
Q.~Liu and D.~Wang.
\newblock {Stein variational gradient descent: A general purpose Bayesian
  inference algorithm}.
\newblock In \emph{Proceedings of the 30th Annual Conference on Neural
  Information Processing Systems}, pages 2378--2386, 2016.

\bibitem[Liu et~al.(2016)Liu, Lee, and Jordan]{Liu2016b}
Q.~Liu, J.~Lee, and M.~Jordan.
\newblock A kernelized {S}tein discrepancy for goodness-of-fit tests and model
  evaluation.
\newblock In \emph{Proceedings of the 33rd International Conference on Machine
  Learning}, 2016.

\bibitem[Mak and Joseph(2018)]{Mak2016}
S~Mak and VR~Joseph.
\newblock Support points.
\newblock \emph{Annals of Statistics}, 46\penalty0 (6A):\penalty0 2562--2592,
  2018.
\newblock To appear.

\bibitem[Mardia and Jupp(2000)]{Mardia2000}
K.~V. Mardia and P.~E. Jupp.
\newblock \emph{Directional Statistics}.
\newblock John Wiley \& Sons, 2000.

\bibitem[Marzo and Mas(2019)]{marzo2019discrepancy}
Jordi Marzo and Albert Mas.
\newblock Discrepancy of minimal {R}iesz energy points.
\newblock \emph{arXiv:1907.04814}, 2019.

\bibitem[Marzouk et~al.(2016)Marzouk, Moselhy, Parno, and
  Spantini]{Marzouk2016}
Y.~Marzouk, T.~Moselhy, M.~Parno, and A.~Spantini.
\newblock Sampling via measure transport: An introduction.
\newblock In \emph{Handbook of Uncertainty Quantification}. Springer, 2016.

\bibitem[Meyn and Tweedie(2012)]{Meyn2012}
S.~P. Meyn and R.~L. Tweedie.
\newblock \emph{Markov Chains and Stochastic Stability}.
\newblock Springer Science \& Business Media, 2012.

\bibitem[Mijatovi{\'c} and Vogrinc(2019)]{mijatovic2019asymptotic}
Aleksandar Mijatovi{\'c} and Jure Vogrinc.
\newblock Asymptotic variance for random walk {M}etropolis chains in high
  dimensions: logarithmic growth via the {P}oisson equation.
\newblock \emph{Advances in Applied Probability}, 51\penalty0 (4):\penalty0
  994--1026, 2019.

\bibitem[Mira et~al.(2013)Mira, Solgi, and Imparato]{Mira2013}
A.~Mira, R.~Solgi, and D.~Imparato.
\newblock {Zero variance Markov chain Monte Carlo for Bayesian estimators}.
\newblock \emph{Statistics and Computing}, 23\penalty0 (5):\penalty0 653--662,
  2013.

\bibitem[Mroueh(2018)]{mroueh2018regularized}
Youssef Mroueh.
\newblock Regularized finite dimensional kernel {S}obolev discrepancy.
\newblock \emph{arXiv:1805.06441}, 2018.

\bibitem[Mroueh et~al.(2017)Mroueh, Li, Sercu, Raj, and
  Cheng]{mroueh2017sobolev}
Youssef Mroueh, Chun-Liang Li, Tom Sercu, Anant Raj, and Yu~Cheng.
\newblock Sobolev gan.
\newblock In \emph{Proceedings of the 6th International Conference on Learning
  Representations}, 2017.

\bibitem[Mroueh et~al.(2019)Mroueh, Sercu, and Raj]{mroueh2019sobolev}
Youssef Mroueh, Tom Sercu, and Anant Raj.
\newblock Sobolev descent.
\newblock In \emph{Proceedings of the 22nd International Conference on
  Artificial Intelligence and Statistics}, 2019.

\bibitem[Nicolaescu(2007)]{nicolaescu2007lectures}
Liviu~I Nicolaescu.
\newblock \emph{Lectures on the {G}eometry of {M}anifolds}.
\newblock World Scientific, 2007.

\bibitem[Oates et~al.(2017)Oates, Girolami, and Chopin]{Oates2017}
C.~J. Oates, M.~Girolami, and N.~Chopin.
\newblock Control functionals for {M}onte {C}arlo integration.
\newblock \emph{Journal of the Royal Statistical Society: Series B},
  79\penalty0 (3):\penalty0 695--718, 2017.

\bibitem[Oates et~al.(2019)Oates, Cockayne, Briol, and Girolami]{Oates2018}
C.~J. Oates, J.~Cockayne, F.-X. Briol, and M.~Girolami.
\newblock Convergence rates for a class of estimators based on {S}tein's
  identity.
\newblock \emph{Bernoulli}, 25\penalty0 (2):\penalty0 1141--1159, 2019.

\bibitem[Philippe and Robert(2001)]{Philippe2001}
A.~Philippe and C.~P. Robert.
\newblock {Riemann sums for MCMC estimation and convergence monitoring}.
\newblock \emph{Statistics and Computing}, 11\penalty0 (2):\penalty0 103--115,
  2001.

\bibitem[Porcu et~al.(2016)Porcu, Bevilacqua, and Genton]{Porcu2016}
E.~Porcu, M.~Bevilacqua, and M.~G. Genton.
\newblock Spatio-temporal covariance and cross-covariance functions of the
  great circle distance on a sphere.
\newblock \emph{Journal of the American Statistical Association}, 111\penalty0
  (514):\penalty0 888--898, 2016.

\bibitem[Reznikov and Saff(2015)]{Reznikov2015}
A~Reznikov and EB~Saff.
\newblock The covering radius of randomly distributed points on a manifold.
\newblock \emph{International Mathematics Research Notices}, 2016\penalty0
  (19):\penalty0 6065--6094, 2015.

\bibitem[Riabiz et~al.(2020)Riabiz, Chen, Cockayne, Swietach, Niederer, Mackey,
  and Oates]{Riabiz2020}
Marina Riabiz, Wilson Chen, Jon Cockayne, Pawel Swietach, Steven~A. Niederer,
  Lester Mackey, and Chris.~J. Oates.
\newblock Optimal thinning of {MCMC} output.
\newblock \emph{arXiv:2005.03952}, 2020.

\bibitem[Roussel and Stoltz(2019)]{roussel2019perturbative}
Julien Roussel and Gabriel Stoltz.
\newblock A perturbative approach to control variates in molecular dynamics.
\newblock \emph{Multiscale Modeling \& Simulation}, 17\penalty0 (1):\penalty0
  552--591, 2019.

\bibitem[Rue et~al.(2009)Rue, Martino, and Chopin]{Rue2009}
H.~Rue, S.~Martino, and N.~Chopin.
\newblock Approximate {B}ayesian inference for latent {G}aussian models by
  using integrated nested {L}aplace approximations.
\newblock \emph{Journal of the Royal Statistical Society: Series B},
  71\penalty0 (2):\penalty0 319--392, 2009.

\bibitem[Scheuerer et~al.(2013)Scheuerer, Schaback, and
  Schlather]{Scheuerer2013}
M.~Scheuerer, R.~Schaback, and M~Schlather.
\newblock Interpolation of spatial data - a stochastic or a deterministic
  problem?
\newblock \emph{European Journal of Applied Mathematics}, 24\penalty0
  (4):\penalty0 601--629, 2013.

\bibitem[Sch\"{o}lkopf et~al.(2001)Sch\"{o}lkopf, Herbrich, and
  Smola]{Scholkopf2001}
B.~Sch\"{o}lkopf, R.~Herbrich, and A.~Smola.
\newblock A generalized representer theorem.
\newblock In \emph{Proceedings of the 14th Annual Conference on Computational
  Learning Theory}, 2001.

\bibitem[Schwab and Stuart(2012)]{Schwab2012}
C.~Schwab and A.~M. Stuart.
\newblock {Sparse deterministic approximation of Bayesian inverse problems}.
\newblock \emph{Inverse Problems}, 28\penalty0 (4):\penalty0 045003, 2012.

\bibitem[Si et~al.(2020)Si, Oates, Duncan, Carin, and Briol]{Si2020}
S.~Si, C.~J. Oates, A.~B. Duncan, L.~Carin, and F.-X. Briol.
\newblock Scalable control variates for {M}onte {C}arlo methods via stochastic
  optimization.
\newblock \emph{arXiv:2006.07487}, 2020.

\bibitem[Simon-Gabriel and Sch\"{o}lkopf(2018)]{Simon2018}
Carl-Johann Simon-Gabriel and Bernhard Sch\"{o}lkopf.
\newblock Kernel distribution embeddings: Universal kernels, characteristic
  kernels and kernel metrics on distributions.
\newblock \emph{Journal of Machine Learning Research}, 19:\penalty0 1--29,
  2018.

\bibitem[Simon-Gabriel et~al.(2020)Simon-Gabriel, Barp, and
  Mackey]{SimonGabriel2020}
Carl-Johann Simon-Gabriel, Alessandro Barp, and Lester Mackey.
\newblock Metrizing weak convergence with maximum mean discrepancies.
\newblock \emph{arXiv:2006.09268}, 2020.

\bibitem[Smola et~al.(2007)Smola, Gretton, Song, and Schölkopf]{Smola2007}
A.~Smola, A.~Gretton, L.~Song, and B.~Schölkopf.
\newblock A {H}ilbert space embedding for distributions.
\newblock In \emph{Proceedings of the 18th International Conference on
  Algorithmic Learning Theory}, 2007.

\bibitem[South et~al.(2017)South, Pettitt, Friel, and
  Drovandi]{south2017efficient}
Leah~F South, Anthony~N Pettitt, Nial Friel, and Christopher~C Drovandi.
\newblock Efficient use of derivative information within {SMC} methods for
  static {B}ayesian models.
\newblock Technical report, Queensland University of Technology, 2017.

\bibitem[South et~al.(2018)South, Oates, Mira, and Drovandi]{South2018}
Leah~F South, Chris~J Oates, Antonietta Mira, and Christopher Drovandi.
\newblock Regularised zero-variance control variates for high-dimensional
  variance reduction.
\newblock \emph{arXiv:1811.05073}, 2018.

\bibitem[South et~al.(2020)South, Karvonen, Nemeth, Girolami, and
  Oates]{South2020}
Leah~F. South, Toni Karvonen, Chris Nemeth, Mark Girolami, and Chris.~J. Oates.
\newblock Semi-exact control functionals from sard's method.
\newblock \emph{arXiv:2002.00033}, 2020.

\bibitem[Sriperumbudur et~al.(2011)Sriperumbudur, Fukumizu, and
  Lanckriet]{Sriperumbudur2011}
B.K. Sriperumbudur, K.~Fukumizu, and G.R. Lanckriet.
\newblock Universality, characteristic kernels and {RKHS} embedding of
  measures.
\newblock \emph{Journal of Machine Learning Research}, 12\penalty0
  (7):\penalty0 2389--2410, 2011.

\bibitem[Stein(1972)]{Stein1972}
C~Stein.
\newblock A bound for the error in the normal approximation to the distribution
  of a sum of dependent random variables.
\newblock In \emph{Proceedings of the 6th Berkeley Symposium on Mathematical
  Statistics and Probability 2}, pages 583--602, 1972.

\bibitem[Stein(2012)]{Stein2012}
M.L. Stein.
\newblock \emph{Interpolation of Spatial Data: Some Theory for Kriging}.
\newblock Springer Science \& Business Media, 2012.

\bibitem[Steinwart and Christmann(2008)]{Steinwart2008}
I.~Steinwart and A.~Christmann.
\newblock \emph{Support Vector Machines}.
\newblock Springer Science \& Business Media, 2008.

\bibitem[Stone(1948)]{Stone1948}
M.H. Stone.
\newblock The generalized {W}eierstrass approximation theorem.
\newblock \emph{Mathematics Magazine}, 21\penalty0 (5):\penalty0 237--254,
  1948.

\bibitem[Szekeres(2004)]{Szekeres2004course}
P.~Szekeres.
\newblock \emph{A Course in Modern Mathematical Physics: Groups, Hilbert Space
  and Differential Geometry}.
\newblock Cambridge University Press, 2004.

\bibitem[Teymur et~al.(2020)Teymur, Gorham, Riabiz, Oates, et~al.]{Teymur2020}
Onur Teymur, Jackson Gorham, Marina Riabiz, Chris Oates, et~al.
\newblock Optimal quantisation of probability measures using maximum mean
  discrepancy.
\newblock \emph{arXiv:2010.07064}, 2020.

\bibitem[Traub(2003)]{Traub2003}
J.F. Traub.
\newblock \emph{Information-Based Complexity}.
\newblock John Wiley and Sons Ltd., 2003.

\bibitem[Xu and Matsuda(2020)]{xu2020stein}
Wenkai Xu and Takeru Matsuda.
\newblock A {S}tein goodness-of-fit test for directional distributions.
\newblock In \emph{Proceedings of the 23rd International Conference on
  Artificial Intelligence and Statistics}, 2020.

\bibitem[Zhu et~al.(2018)Zhu, Wan, and Zhong]{Zhu2018}
Z.~Zhu, R.~Wan, and M.~Zhong.
\newblock Neural control variates for variance reduction.
\newblock \emph{arXiv:1806.00159}, 2018.

\bibitem[Zhuo et~al.(2018)Zhuo, Liu, Shi, Zhu, Chen, and Zhang]{Zhuo2018}
J.~Zhuo, C.~Liu, J.~Shi, J.~Zhu, N.~Chen, and B.~Zhang.
\newblock Message passing {S}tein variational gradient descent.
\newblock In \emph{Proceedings of the 35th International Conference on Machine
  Learning}, pages 6013--6022, 2018.

\end{thebibliography}

\begin{thebibliography}{20}
\providecommand{\natexlab}[1]{#1}
\providecommand{\url}[1]{\texttt{#1}}
\expandafter\ifx\csname urlstyle\endcsname\relax
  \providecommand{\doi}[1]{doi: #1}\else
  \providecommand{\doi}{doi: \begingroup \urlstyle{rm}\Url}\fi

\bibitem[Bevilacqua et~al.(2019)Bevilacqua, Faouzi, Furrer, and
  Porcu]{Bevilacqua2017}
M~Bevilacqua, T~Faouzi, R~Furrer, and E~Porcu.
\newblock Estimation and prediction using generalized wendland functions under
  fixed domain asymptotics.
\newblock \emph{Annals of Statistics}, 47\penalty0 (2):\penalty0 828--856,
  2019.
\newblock To appear.

\bibitem[Bingham(1973)]{Bingham1973}
NH~Bingham.
\newblock Positive definite functions on spheres.
\newblock In \emph{Mathematical Proceedings of the Cambridge Philosophical
  Society}, volume~73, pages 145--156. Cambridge University Press, 1973.

\bibitem[Brauchart and Dick(2013)]{Brauchart2013}
J.~S. Brauchart and J.~Dick.
\newblock {A characterization of Sobolev spaces on the sphere and an extension
  of Stolarsky's invariance principle to arbitrary smoothness}.
\newblock \emph{Constructive Approximation}, 38\penalty0 (3):\penalty0
  397--445, 2013.

\bibitem[Dai and Xu(2013)]{Dai2013}
F.~Dai and Y.~Xu.
\newblock \emph{Approximation Theory and Harmonic Analysis on Spheres and
  Balls}.
\newblock Springer, 2013.

\bibitem[Daley and Porcu(2013)]{Daley2013}
D.J. Daley and E.~Porcu.
\newblock Dimension walks through {S}choenberg spectral measures.
\newblock \emph{Proceedings of the American Mathematical Society}, 142\penalty0
  (5):\penalty0 1813--1824, 2013.

\bibitem[Fasshauer and Ye(2011)]{Fasshauer2011}
G.~E. Fasshauer and Q.~Ye.
\newblock {Reproducing kernels of generalized Sobolev spaces via a Green
  function approach with distributional operators}.
\newblock \emph{Numerische Mathematik}, 119\penalty0 (3):\penalty0 585--611,
  2011.

\bibitem[Gneiting(2002)]{Gneiting2002a}
T.~Gneiting.
\newblock Nonseparable, stationary covariance functions for space--time data.
\newblock \emph{Journal of the American Statistical Association}, 97\penalty0
  (458):\penalty0 590--600, 2002.

\bibitem[Gneiting(2013)]{Gneiting2013}
T~Gneiting.
\newblock Strictly and non-strictly positive definite functions on spheres.
\newblock \emph{Bernoulli}, 19\penalty0 (4):\penalty0 1327--1349, 2013.

\bibitem[Lin et~al.(2019)Lin, Niu, Cheung, and Dunson]{Lin2017}
L.~Lin, M.~Niu, P.~Cheung, and D.~Dunson.
\newblock Extrinsic {G}aussian processes for regression and classification on
  manifolds.
\newblock \emph{Bayesian Analysis}, 14\penalty0 (3):\penalty0 887--906, 2019.

\bibitem[Lindgren et~al.(2011)Lindgren, Rue, and Lindstr\"{o}m]{Lindgren2011}
F.~Lindgren, H.~Rue, and J.~Lindstr\"{o}m.
\newblock {An explicit link between Gaussian fields and Gaussian Markov random
  fields: The stochastic partial differential equation approach}.
\newblock \emph{Journal of the Royal Statistical Society: Series B},
  73\penalty0 (4):\penalty0 423--498, 2011.

\bibitem[Marinucci and Peccati(2011)]{Marinucci2011}
D.~Marinucci and G.~Peccati.
\newblock \emph{Random Fields on the Sphere: Representation, Limit Theorems and
  Cosmological Applications}, volume 389.
\newblock Cambridge University Press, 2011.

\bibitem[Narcowich and Ward(2002)]{Narcowich2002}
F.J. Narcowich and J.D. Ward.
\newblock Scattered data interpolation on spheres: error estimates and locally
  supported basis functions.
\newblock \emph{SIAM Journal on Mathematical Analysis}, 33\penalty0
  (6):\penalty0 1393--1410, 2002.

\bibitem[Narcowich et~al.(2007)Narcowich, Sun, and Ward]{Narcowich2007}
F.J. Narcowich, X.~Sun, and J.D. Ward.
\newblock Approximation power of {RBF}s and their associated {SBF}s: a
  connection.
\newblock \emph{Advances in Computational Mathematics}, 27\penalty0
  (1):\penalty0 107--124, 2007.

\bibitem[Niu et~al.(2019)Niu, Cheung, Lin, Dai, Lawrence, and Dunson]{Niu2017}
M.~Niu, P.~Cheung, L.~Lin, Z.~Dai, N.~Lawrence, and D.~Dunson.
\newblock Intrinsic {G}aussian processes on complex constrained domains.
\newblock \emph{Journal of the Royal Statistical Society: Series B}, 2019.
\newblock To appear.

\bibitem[Porcu et~al.(2016)Porcu, Bevilacqua, and Genton]{Porcu2016}
E.~Porcu, M.~Bevilacqua, and M.~G. Genton.
\newblock Spatio-temporal covariance and cross-covariance functions of the
  great circle distance on a sphere.
\newblock \emph{Journal of the American Statistical Association}, 111\penalty0
  (514):\penalty0 888--898, 2016.

\bibitem[Semechko(2015)]{Semechko2015}
A.~Semechko.
\newblock Suite of functions to perform uniform sampling of a sphere.
\newblock {MATLAB File Exchange Server}, 2015.
\newblock URL
  \url{https://uk.mathworks.com/matlabcentral/fileexchange/37004-suite-of-functions-to-perform-uniform-sampling-of-a-sphere}.

\bibitem[Stein(2012)]{Stein2012}
M.L. Stein.
\newblock \emph{Interpolation of Spatial Data: Some Theory for Kriging}.
\newblock Springer Science \& Business Media, 2012.

\bibitem[Wendland(1995)]{Wendland1995}
H.~Wendland.
\newblock Piecewise polynomial, positive definite and compactly supported
  radial functions of minimal degree.
\newblock \emph{Advances in Computational Mathematics}, 4\penalty0
  (1):\penalty0 389--396, 1995.

\bibitem[Wendland(1998)]{Wendland1998}
H.~Wendland.
\newblock Error estimates for interpolation by compactly supported radial basis
  functions of minimal degree.
\newblock \emph{Journal of Approximation Theory}, 93\penalty0 (2):\penalty0
  258--272, 1998.

\bibitem[Zastavnyi(2006)]{Zastavnyi2006}
VP~Zastavnyi.
\newblock On some properties of {B}uhmann functions.
\newblock \emph{Ukrainian Mathematical Journal}, 58\penalty0 (8):\penalty0
  1184--1208, 2006.

\end{thebibliography}
\end{document}